\theoremstyle{plain}
\newtheorem{thm}{Theorem}[section]
\newtheorem{lem}[thm]{Lemma}
\newtheorem{prop}[thm]{Proposition}
\newtheorem{cor}[thm]{Corollary}
\newtheorem{conj}[thm]{Conjecture}
\theoremstyle{definition}
\newtheorem{defn}[thm]{Definition}
\newtheorem{rem}[thm]{Remark}
\newtheorem{ex}[thm]{Example}
\numberwithin{equation}{section}
\newcommand{\Q}{\mathbb{Q}}
\newcommand{\R}{\mathbb{R}}
\newcommand{\bk}{\mathbf{k}}
\newcommand{\bl}{\mathbf{l}}
\newcommand{\frH}{\mathfrak{H}}
\newcommand{\frP}{\mathfrak{P}}
\newcommand{\sh}{\mathcyr{sh}}
\newcommand{\bsh}{\mathbin{\mathcyr{sh}}}
\newcommand{\bast}{\mathbin{\bar{*}}}
\newcommand{\Zbar}{Z^\star}
\newcommand{\rhobar}{\rho^\star}
\newcommand{\rev}[1]{\overleftarrow{#1}}
\newcommand{\dual}{\dagger}
\newcommand{\trp}{\mathrm{t}}
\begin{document}
\title{A new integral-series identity of multiple zeta values \\
and regularizations}
\author{Masanobu Kaneko and Shuji Yamamoto}
\date{\today}
\maketitle 

\begin{abstract}
We present a new ``integral $=$ series'' type identity of multiple zeta values, 
and show that this is equivalent in a suitable sense to the fundamental theorem of regularization. 
We conjecture that this identity is enough to describe 
{\it all} linear relations of multiple zeta values over $\Q$. 
We also establish the regularization theorem for multiple zeta-star values, 
which too is equivalent to our new identity.  
A connection to Kawashima's relation is discussed as well. 
\end{abstract}

\section{Introduction}
The multiple zeta values (MZVs) and the multiple zeta-star values 
(MZSVs, or sometimes referred to as the non-strict MZVs) are 
defined respectively by the nested series
\begin{align*}
\zeta(k_1,\ldots, k_r)&=\sum_{0<m_1<\cdots<m_r}
\frac{1}{m_1^{k_1}\cdots m_r^{k_r}} \\
\intertext{and}
\zeta^\star(k_1,\ldots, k_r)&=\sum_{0<m_1\leq \cdots\leq m_r}
\frac{1}{m_1^{k_1}\cdots m_r^{k_r}},  
\end{align*}
where $k_i\ (1\le i\le r)$ are arbitrary positive integers with $k_r\ge2$ 
(to ensure the convergence). 

These numbers appear in various branches of mathematics as well as mathematical physics, and 
have been actively studied since more than two decades. One of the main points of interest in those studies 
is to find as many relations of MZVs as possible, and to pin down concretely the set of 
basic relations which describe all linear or algebraic relations of MZVs over $\Q$. 
Several candidates of such a set are known today, 
of which we only mention here the ``associator relations'' 
and the ``extended double shuffle relations.'' 
However, whether any of them give {\it all} relations 
is still conjectural and unknown so far.  

In this paper, we present a new, very simple and elementary relation 
which conjecturally supplies all linear relations. 
The form of the relation (Theorem \ref{prop:SI}) is 
\[\zeta\bigl(\mu(\bk,\bl)\bigr)=
\sum_{0<m_1<\cdots<m_r=n_s\geq\cdots\geq n_1>0}
\frac{1}{m_1^{k_1}\cdots m_r^{k_r}n_1^{l_1}\cdots n_s^{l_s}}, \]
where $\bk=(k_1,\ldots,k_r)$ and $\bl=(l_1,\ldots,l_s)$ are 
any arrays of positive integers, and the left-hand side is 
a certain integral which can be written, likewise the sum on the right, 
as a linear combination of MZVs.  
See \S\S \ref{sec:poset}, \ref{sec:main} for precise definition. 

We also show that these relations, together with either the shuffle or the harmonic (or stuffle) product formula, are equivalent to the 
extended (or regularized) double shuffle relation (Theorem \ref{thm:equiv} and Theorem \ref{thm:shuffle=harmonic}), 
which is conjectured to give all relations of MZVs. 
A way of regularizing the MZSVs is also newly introduced, and 
Theorem \ref{thm:equiv} contains the regularization theorem of MZSVs as well.

We first give necessary preliminaries in \S\ref{sec:notation} and \S\ref{sec:poset}, and then state our 
main theorems in \S\ref{sec:main}. The proof of Theorem \ref{prop:SI} is given immediately after stating the 
theorem, whereas the proof of Theorem \ref{thm:equiv} is separately given in \S\ref{sec:proof}.  
In \S\ref{sec:kawashima}, we discuss a relation between our theorems and Kawashima's relation. 
Assuming the duality, Kawashima's relation is also deduced from our main identity. 
In the final \S\ref{sec:sumformula}, we deduce the restricted sum formula 
of Eie-Liaw-Ong \cite{ELO} from our identity. 

\section{Notation and algebraic setup}\label{sec:notation}
A finite sequence $\bk=(k_1,\ldots,k_r)$ of positive integers is called 
an \emph{index}. Here the length $r$ (called the \emph{depth} of $\bk$) 
can be $0$ and the unique index of depth $0$, namely the empty sequence,
is denoted by $\varnothing$.  An index $\bk=(k_1,\ldots,k_r)$ is \emph{admissible} if $k_r\geq 2$. The index 
$\varnothing$ is also regarded as an admissible index. The sum of components of an index  is called \emph{weight},
and $\varnothing$ is thought to be of weight 0. 
 
Then, as already defined in the introduction, 
multiple zeta and zeta-star values 
associated to an admissible index $\bk=(k_1,\ldots,k_r)$ are given by 
\begin{align*}
\zeta(\bk)&=\sum_{0<m_1<\cdots<m_r}
\frac{1}{m_1^{k_1}\cdots m_r^{k_r}} \\
\intertext{and}
\zeta^\star(\bk)&=\sum_{0<m_1\leq \cdots\leq m_r}
\frac{1}{m_1^{k_1}\cdots m_r^{k_r}} 
\end{align*}
respectively. We set $\zeta(\varnothing)=\zeta^\star(\varnothing)=1$. 

We recall Hoffman's algebraic setup \cite{H} with a slightly different 
convention. Write $\frH=\Q\langle e_0,e_1\rangle$ for 
the noncommutative polynomial algebra 
of indeterminates $e_0$ and $e_1$ over $\Q$, 
and define its subalgebras $\frH^0$ and $\frH^1$ by 
\[\frH^0=\Q+e_1\frH e_0\subset \frH^1=\Q+e_1\frH\subset \frH. \]
We put $e_k=e_1e_0^{k-1}$ for any positive integer $k$, so that 
the monomials $e_{k_1}\cdots e_{k_r}$ associated to all indices 
(resp.\ admissible indices) $(k_1,\ldots,k_r)$ form a basis of $\frH^1$ 
(resp.\ $\frH^0$) over $\Q$ (the monomial associated to $\varnothing$ is 1). We often identify an index $(k_1,\ldots,k_r)$ 
with the monomial $e_{k_1}\cdots e_{k_r}$ in $\frH^1$. 

We consider two $\Q$-bilinear commutative products $\bsh$ 
on $\frH$ and $*$ on $\frH^1$, 
called the shuffle and the harmonic (or stuffle) products, 
which are characterized by the bilinearity and the recursive formulas
\begin{gather*}
1\bsh w=w\bsh 1=w\quad (w\in\frH), \\
av\bsh bw=a(v\bsh bw)+b(av\bsh w)\quad (a,b\in\{e_0,e_1\}, v,w\in\frH), 
\end{gather*}
and 
\begin{gather*}
1*w=w*1=w\quad (w\in\frH^1), \\
e_kv*e_lw=e_k(v*e_lw)+e_l(e_kv*w)+e_{k+l}(v*w)\quad (k,l\geq 1, v,w\in\frH^1), 
\end{gather*}
respectively. We denote by $\frH_\sh$ (resp.\ $\frH_*^1$) 
the commutative $\Q$-algebra $\frH$ (resp.\ $\frH^1$) equipped with multiplication $\sh$ (resp.\ $*$). 
Then the subspaces $\frH^1$ and $\frH^0$ of $\frH$ (resp. the subspace $\frH^0$ of $\frH^1$) are closed under
$\sh$ (resp. $*$) and become subalgebras of $\frH_\sh$ (resp. $\frH_*^1$)  
denoted by $\frH_\sh^1$ and $\frH_\sh^0$ (resp.  $\frH_*^0$). 

If with each index $\bk$ is assigned an element $f(\bk)$ of 
a $\Q$-vector space $V$, we often extend this assignment to a $\Q$-linear map 
from $\frH^1$ (or $\frH^0$ or $e_1\frH$, depending on 
the range of definition of $f$) to $V$ and denote this extension 
by the same symbol $f$. The typical example is the $\Q$-linear map 
$\zeta\colon\frH^0\to\R$ extending the definition of multiple zeta values. 
In the sequel we freely use the letter $\zeta$ with this extended meaning. 

The map $\zeta\colon\frH^0\to\R$ is a $\Q$-algebra homomorphism 
on both $\frH^0_\sh$ and $\frH^0_*$, that is, in terms of indices, 
\begin{equation}\label{doubleshuffle}
\zeta(\bk\bsh\bl)=\zeta(\bk*\bl)=\zeta(\bk)\zeta(\bl)
\end{equation}
for any admissible indices $\bk$ and $\bl$. 
This is the \emph{double shuffle relation} of MZVs. 

Next we briefly review the theory of regularization of multiple zeta values. 
Because of the isomorphisms $\frH_\sh^1\cong \frH^0_\sh[e_1]$ and 
$\frH^1_*\cong\frH^0_*[e_1]$ 
(see \cite[Theorem 6.1]{R} and \cite[Theorem 2.6]{H}), 
we can extend the map $\zeta$ uniquely to $\Q$-algebra homomorphisms 
$\zeta_\sh\colon\frH_\sh^1\to\R[T]$ and $\zeta_*\colon\frH^1_*\to\R[T]$ 
from $\frH^1$ to the polynomial algebra $\R[T]$ by setting $\zeta_\sh(e_1)=\zeta_*(e_1)=T$. 
The extended maps $\zeta_\sh$ and $\zeta_*$ are called the \emph{shuffle} and 
\emph{harmonic regularizations} of $\zeta$, respectively. 
When we need to make the indeterminate $T$ explicit, 
we write $\zeta_\bullet(w;T)$ for $\zeta_\bullet(w)\in\R[T]$, 
where $\bullet=\sh$ or $*$, and we also write $\zeta_\bullet(w;T)$ 
as $\zeta_\bullet(\bk;T)$ when the index $\bk$ corresponds to the word $w$. 

The fundamental theorem of regularizations of MZVs then asserts that the two polynomials 
$\zeta_\sh(\bk;T)$ and $\zeta_*(\bk;T)$ are related with each other by a simple $\R$-linear map 
coming from the Taylor series of the gamma function $\Gamma(u)$. 
Define an $\R$-linear endomorphism $\rho$ on $\R[T]$ by the equality 
\begin{equation}\label{rho}
\rho(e^{Tu})=A(u)e^{Tu} 
\end{equation}
in the formal power series algebra $\R[T][[u]]$ on which $\rho$ acts 
coefficientwise, where 
\[A(u)=\exp\Biggl(\sum_{n=2}^\infty
\frac{(-1)^n}{n}\zeta(n)u^n\Biggr)\in\R[[u]]. \]
Note that $A(u)=e^{\gamma u}\Gamma(1+u)$, where $\gamma$ is Euler's constant. 

\begin{thm}[{\cite[Theorem 1]{IKZ}}] \label{thm:IKZ}
For any index $\bk$, we have 
\begin{equation}\label{regfund}
\zeta_\sh(\bk;T)=\rho\bigl(\zeta_*(\bk;T)\bigr).  
\end{equation}
\end{thm}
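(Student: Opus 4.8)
The plan is to reduce the statement to a comparison of generating functions in one extra variable and then exploit the key fact that the difference between the shuffle and harmonic products on $\frH^1$ is controlled entirely by the ``overlap'' term $e_{k+l}$. Concretely, I would fix an admissible index and consider the two regularized polynomials as values of algebra homomorphisms, so it suffices to check \eqref{regfund} on a set of generators of $\frH^1$ over $\frH^0$, i.e.\ on words of the form $e_1^{\,n}w$ with $w\in\frH^0$; by the isomorphisms $\frH^1_\bullet\cong\frH^0_\bullet[e_1]$ both sides are polynomials in $T$ whose coefficients are honest linear combinations of MZVs, so everything takes place in $\R[T]$.

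Next I would set up the generating-series argument of Ihara--Kaneko--Zagier. For a fixed admissible $w=e_{k_1}\cdots e_{k_r}\in\frH^0$ introduce
\begin{equation*}
Z^\bullet_w(u;T)=\sum_{n\ge 0}\zeta_\bullet\!\bigl(e_1^{\,n}\,w;T\bigr)\,u^n\in\R[T][[u]],\qquad \bullet\in\{\sh,*\}.
\end{equation*}
The point is that each of the products $\sh$ and $*$ lets one ``extract'' a leading $e_1$: using $e_1\bsh v$ and $e_1*v$ one obtains closed recursions for $Z^\sh_w$ and $Z^\ast_w$ in terms of the same data, and a short computation (already carried out in \cite{IKZ}) shows
\begin{equation*}
Z^\ast_w(u;T)=\exp\!\Bigl(\sum_{n\ge2}\tfrac{(-1)^n}{n}\zeta(n)\,u^n\Bigr)^{-1}\cdot Z^\sh_w(u;T)\quad\text{up to the appropriate shift,}
\end{equation*}
i.e.\ the two series differ exactly by the factor $A(u)^{\pm1}$ appearing in the definition \eqref{rho} of $\rho$. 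Since $\rho$ is defined by $\rho(e^{Tu})=A(u)e^{Tu}$ and acts coefficientwise, translating this identity of generating functions back coefficient by coefficient in $u$ yields precisely $\zeta_\sh(e_1^{\,n}w;T)=\rho(\zeta_*(e_1^{\,n}w;T))$ for every $n$, which is \eqref{regfund}.

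I expect the main obstacle to be establishing the exact form of the generating-function identity, i.e.\ pinning down that the correction factor is literally $A(u)$ and not some other power series built from $\zeta(n)$'s with extra combinatorial coefficients. This requires carefully tracking how the harmonic product's diagonal term $e_{k+l}$ contributes when one repeatedly pulls off an $e_1$, and comparing it with the purely shuffle recursion; the bookkeeping is where the factor $\exp\bigl(\sum (-1)^n\zeta(n)u^n/n\bigr)$ emerges, and getting the signs and the $1/n$ right is the delicate part. A secondary point worth checking is that it genuinely suffices to verify the identity on the generators $e_1^{\,n}w$: this follows because $\zeta_\sh$ and $\zeta_*$ are algebra homomorphisms, $\rho$ is compatible with the relevant multiplicative structure on $\R[T]$ (it sends products to products after the identification $T\mapsto$ the variable in $e^{Tu}$, which is exactly the content one needs), and $\frH^0$-part already satisfies \eqref{regfund} trivially since there $\zeta_\sh=\zeta_*=\zeta$ and $\rho$ fixes constants.
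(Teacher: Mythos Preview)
Your proposal follows the original Ihara--Kaneko--Zagier strategy, which is \emph{not} the route this paper takes. The paper gives a new proof: first it establishes the integral--series identity \eqref{eq:SI} by an elementary computation of the iterated integral (Theorem~\ref{prop:SI}), and then proves purely algebraically (Theorem~\ref{thm:equiv}, via the identities \eqref{eq:A_sh}--\eqref{eq:A_*^star} of Lemma~\ref{lem:algebraic identities}) that, under the double shuffle relation, \eqref{eq:FSI} is equivalent to \eqref{eq:Reg}. So the only analytic input is the computation of a convergent integral, whereas the IKZ argument you sketch relies on comparing asymptotics of truncated sums and integrals (or, in the algebraic reformulation, on the recursion obtained by stripping off $e_1$'s and identifying the resulting correction series as $A(u)$).

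There is also a genuine gap in your reduction step, caused by a convention mismatch. In this paper $e_k=e_1e_0^{k-1}$, the index $(k_1,\ldots,k_r)$ corresponds to $e_{k_1}\cdots e_{k_r}$, and admissibility means $k_r\ge 2$, i.e.\ the word \emph{ends} in $e_0$. Consequently, for any $w\in\frH^0$ with $w\neq 1$ and any $n\ge 0$, the concatenation $e_1^{\,n}w$ already lies in $\frH^0$; on such elements $\zeta_\sh=\zeta_*=\zeta$ and \eqref{regfund} is trivial. Your family $\{e_1^{\,n}w\}$ therefore misses precisely the non-admissible words (those ending in $e_1$) on which the theorem has content. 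The correct spanning set here is $\{w\,e_1^{\,n}:w\in\frH^0,\ n\ge 0\}$, and the generating series should be $\sum_{n\ge 0}\zeta_\bullet(w\,e_1^{\,n};T)\,u^n$.

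Finally, your justification that ``it suffices to check on generators'' because $\rho$ ``is compatible with the relevant multiplicative structure'' is not right as stated: $\rho$ is only $\R$-linear, not a ring homomorphism on $\R[T]$, so you cannot reduce to algebra generators via multiplicativity. What actually works is that both $\zeta_\sh$ and $\rho\circ\zeta_*$ are $\Q$-\emph{linear} on $\frH^1$, so it suffices to check the identity on the linear spanning set $\{w\,e_1^{\,n}\}$ above; the generating-function argument then has to be carried out for each fixed admissible $w$ separately.
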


It is conjectured that this relation 
(or more precisely the relations obtained by comparing the coefficients), 
together with the double shuffle relation \eqref{doubleshuffle}, 
describes all (algebraic and linear) relations of MZVs over $\Q$.  

For a non-empty index $\bk=(k_1,\ldots,k_r)$, we write $\bk^\star$ 
for the formal sum of $2^{r-1}$ indices of the form 
$(k_1\bigcirc \cdots \bigcirc k_r)$, where each $\bigcirc$ is 
replaced by `\,,\,' or `+'. 
We also put $\varnothing^\star=\varnothing$. 
Then $\bk^\star$ is identified with an element of $\frH^1$, 
and we have $\zeta^\star(\bk)=\zeta(\bk^\star)$ for admissible $\bk$. 

Finally, we introduce the $\Q$-bilinear `circled harmonic product' 
$\circledast\colon e_1\frH\times e_1\frH \to e_1\frH e_0$
defined by 
\[ve_k\circledast we_l=(v*w)e_{k+l}\quad (k,l\geq 1, v,w\in\frH^1). \]
This is a binary operation on the space of formal sums of non-empty indices 
taking values in the subspace spanned by non-empty admissible indices. 
We readily see from the definition that, 
for non-empty indices $\bk=(k_1,\ldots,k_r)$ and 
$\bl=(l_1,\ldots,l_s)$, we have the series expression
\begin{equation}\label{eq:cast}
\zeta\bigl(\bk\circledast\bl^\star\bigr)=
\sum_{0<m_1<\cdots<m_r=n_s\geq\cdots\geq n_1>0}
\frac{1}{m_1^{k_1}\cdots m_r^{k_r}n_1^{l_1}\cdots n_s^{l_s}}. 
\end{equation}
To see this, we put $n=m_r (=n_s)$ and write the right-hand side as
\[ \sum_{n=1}^\infty\biggl( \sum_{0<m_1<\cdots<m_{r-1}<n}\frac{1}{m_1^{k_1}\cdots m_{r-1}^{k_{r-1}}}
\biggr)\biggl(\sum_{0<n_1\le\cdots \le n_{s-1}\le n} \frac{1}{n_1^{l_1}\cdots n_{s-1}^{l_{s-1}}}\biggr)\frac1{n^{k_r+l_s}}, \]
and note that the product of truncated sums for a fixed $n$ obey the harmonic product rule.

The formula \eqref{eq:cast} includes MZV and MZSV as special cases: 
\begin{align*}
\zeta\bigl(\bk\circledast (1)^\star)&=\zeta(k_1,\ldots,k_{r-1},k_r+1)\\
\intertext{and}
\zeta\bigl((1)\circledast \bl^\star)&=\zeta^\star(l_1,\ldots,l_{s-1},l_s+1). 
\end{align*}

\section{Review on 2-posets and associated integrals}\label{sec:poset}

In this section, we review the definitions and basic properties of 2-labeled posets 
(in this paper, we call them 2-posets for short) 
and the associated integrals introduced by the second-named author in \cite{Y}. 
\begin{defn}
A \textit{2-poset} is a pair $(X,\delta_X)$, where $X=(X,\leq)$ is 
a finite partially ordered set (poset for short) and 
$\delta_X$ is a map from $X$ to $\{0,1\}$. 
We often omit  $\delta_X$ and simply say ``a 2-poset $X$.'' 
The $\delta_X$ is called the \emph{label map} of $X$. 

A 2-poset $(X,\delta_X)$ is called \textit{admissible} if 
$\delta_X(x)=0$ for all maximal elements $x\in X$ and 
$\delta_X(x)=1$ for all minimal elements $x\in X$. 
\end{defn}

A 2-poset is depicted as a Hasse diagram in which an element $x$ with 
$\delta(x)=0$ (resp. $\delta(x)=1$) is represented by $\circ$ 
(resp. $\bullet$). For example, the diagram 
\[\begin{xy}
{(0,-4) \ar @{{*}-o} (4,0)},
{(4,0) \ar @{-{*}} (8,-4)},
{(8,-4) \ar @{-o} (12,0)},
{(12,0) \ar @{-o} (16,4)}
\end{xy} \]
represents the 2-poset $X=\{x_1,x_2,x_3,x_4,x_5\}$ with order 
$x_1<x_2>x_3<x_4<x_5$ and label 
$(\delta_X(x_1),\ldots,\delta_X(x_5))=(1,0,1,0,0)$. 
This 2-poset is admissible. 

\begin{defn}
For an admissible 2-poset $X$, we define the associated integral 
\begin{equation}\label{eq:I(X)}
I(X)=\int_{\Delta_X}\prod_{x\in X}\omega_{\delta_X(x)}(t_x), 
\end{equation}
where 
\[\Delta_X=\bigl\{(t_x)_x\in [0,1]^X \bigm| t_x<t_y \text{ if } x<y\bigr\}\]
and 
\[\omega_0(t)=\frac{dt}{t}, \quad \omega_1(t)=\frac{dt}{1-t}. \]
\end{defn}
Note that the admissibility of a 2-poset corresponds to 
the convergence of the associated integral. 

\begin{ex}\label{ex:MZV}
When an admissible 2-poset is totally ordered, 
the corresponding integral is exactly the iterated integral expression 
for a multiple zeta value. 
To be precise, for an index $\bk=(k_1,\ldots,k_r)$ (admissible or not), 
we write
\[\begin{xy}
{(5,2)*++[o][F]{\bk}="k"}, 
{(0,-3) \ar @{{*}-} "k"}, 
\end{xy}\]
for the `totally ordered' diagram: 
\[\begin{xy}
{(0,-24) \ar @{{*}-o} (4,-20)}, 
{(4,-20) \ar @{.o} (10,-14)}, 
{(10,-14) \ar @{-} (14,-10)}, 
{(14,-10) \ar @{.} (20,-4)}, 
{(20,-4) \ar @{-{*}} (24,0)}, 
{(24,0) \ar @{-o} (28,4)}, 
{(28,4) \ar @{.o} (34,10)}, 
{(34,10) \ar @{-{*}} (38,14)}, 
{(38,14) \ar @{-o} (42,18)}, 
{(42,18) \ar @{.o} (48,24)}, 
{(0,-23) \ar @/^2mm/ @{-}^{k_1} (9,-14)}, 
{(24,1) \ar @/^2mm/ @{-}^{k_{r-1}} (33,10)}, 
{(38,15) \ar @/^2mm/ @{-}^{k_r} (47,24)} 
\end{xy} \]
If $k_i=1$, we understand the notation $\begin{xy}
{(0,-5) \ar @{{*}-o} (4,-1)}, 
{(4,-1) \ar @{.o} (10,5)}, 
{(0,-4) \ar @/^2mm/ @{-}^{k_i} (9,5)} 
\end{xy}$ as a single $\bullet$, and if $\bk=\varnothing$, we regard the diagram as the empty 2-poset. 

Then, if $\bk$ is an admissible index, we have 
\begin{equation}\label{eq:SI MZV}
\zeta(\bk)=
I\left(\begin{xy}
{(5,2)*++[o][F]{\bk}="k"}, 
{(0,-3) \ar @{{*}-} "k"}, 
\end{xy}\right),
\end{equation}
and any totally ordered admissible 2-poset is of the form $\begin{xy}
{(5,2)*++[o][F]{\bk}="k"}, 
{(0,-3) \ar @{{*}-} "k"}, 
\end{xy}$ with an admissible index $\bk$.
\end{ex}

\begin{ex}\label{ex:MZSV}
In \cite{Y}, an integral expression for multiple zeta-star values 
is described in terms of a 2-poset. 
For an index $\bl=(l_1,\ldots,l_s)$, we write 
\[\begin{xy}
{(7,0)*++[F]{\bl}="l"}, 
{(0,0)*{\odot} \ar @{-} "l"}, 
\end{xy}\]
for the following diagram: 
\[\begin{xy}
{(0,-4) *{\odot} \ar @{-o} (4,0)}, 
{(4,0) \ar @{.o} (8,4)}, 
{(8,4) \ar @{-{*}} (12,-4)}, 
{(12,-4) \ar @{.} (14,-2)}, 
{(16,0) \ar @{.} (20,0)}, 
{(22,0) \ar @{.{*}} (24,-4)}, 
{(24,-4) \ar @{-{o}} (28,0)}, 
{(28,0) \ar @{.o} (32,4)}, 
{(32,4) \ar @{-{*}} (36,-4)}, 
{(36,-4) \ar @{-{o}} (40,0)}, 
{(40,0) \ar @{.o} (44,4)}, 
{(0,-3) \ar @/^2mm/ @{-}^{l_s} (7,4)}, 
{(24,-3) \ar @/^2mm/ @{-}^{l_2} (31,4)}, 
{(36,-3) \ar @/^2mm/ @{-}^{l_1} (43,4)}, 
\end{xy}\]
where the symbol $\odot$ represents either $\circ$ or $\bullet$. 
For example, 
\[\begin{xy}
{(10,0)*+[F]{(2,3)}="l"}, 
{(0,0) \ar @{{*}-} "l"}, 
\end{xy}
\ =\ 
\begin{xy}
{(0,-4) \ar @{{*}-} (4,0)}, 
{(4,0) \ar @{o-} (8,4)}, 
{(8,4) \ar @{o-} (12,0)}, 
{(12,0) \ar @{{*}-o} (16,4)}, 
\end{xy}\ ,
\qquad 
\begin{xy}
{(12,0)*+[F]{(3,1,1)}="l"}, 
{(0,0) \ar @{o-} "l"}, 
\end{xy}
\ =\ 
\begin{xy}
{(0,4) \ar @{o-} (4,0)}, 
{(4,0) \ar @{{*}-} (8,-4)}, 
{(8,-4) \ar @{{*}-} (12,0)}, 
{(12,0) \ar @{o-o} (16,4)}, 
\end{xy} \ .\]
Note our convention in the case when some  $l_i$ is 1.  The line from the $\bullet$
corresponding to $l_i=1$ goes down. 

Then, if $\bl$ is an admissible index, we have 
\begin{equation}\label{eq:SI MZSV}
\zeta^\star(\bl)=
I\left(\ \begin{xy}
{(7,0)*++[F]{\bl}="l"}, 
{(0,0) \ar @{{*}-} "l"}, 
\end{xy}\ \right). 
\end{equation}
\end{ex}

We also recall an algebraic setup for 2-posets 
(cf.\ Remark at the end of \S2 of \cite{Y}). 
Let $\frP$ be the $\Q$-algebra generated by 
the isomorphism classes of 2-posets, 
whose multiplication is given by the disjoint union of 2-posets. 
Then the integral \eqref{eq:I(X)} defines a $\Q$-algebra homomorphism 
$I\colon\frP^0\to\R$ from the subalgebra $\frP^0$ of $\frP$ generated by 
the classes of admissible 2-posets. 

Moreover, there is a unique $\Q$-algebra homomorphism $W\colon\frP\to\frH_\sh$ 
which satisfies the following two conditions: 
\begin{enumerate}
\item[1)] If (the underlying poset of) a 2-poset $X=\{x_1<x_2<\cdots<x_k\}$ is totally ordered, 
\[W(X)=e_{\delta_X(x_1)}e_{\delta_X(x_2)}\cdots e_{\delta_X(x_k)}.\]
\item[2)] If $a$ and $b$ are non-comparable elements of a 2-poset $X$, 
the identity 
\begin{equation}\label{eq:shuffle for W}
W(X)=W(X^b_a)+W(X^a_b)
\end{equation}
holds. Here $X^b_a$ denotes the 2-poset that is obtained from $X$ 
by adjoining the relation $a<b$ (see \cite[Definition 2.2 (2)]{Y}).  
\end{enumerate}
Then we have $W(\frP^0)=\frH^0$ and $I=\zeta\circ W\colon\frP^0\to\R$. 

\begin{ex}
\[W\left(\ 
\begin{xy}
{(0,-3) \ar@{{*}-} (3,0)}, 
{(3,0) \ar@{{*}-o} (6,3)}
\end{xy}\ \right)
=e_1e_1e_0, \quad 
W\left(\ 
\begin{xy}
{(0,0) \ar@{{*}-} (3,3)}, 
{(3,3) \ar@{o-} (6,0)}, 
{(0,0) \ar@{-} (3,-3)}, 
{(3,-3) \ar@{{*}-o} (6,0)}, 
\end{xy}\ \right)
=e_1e_1e_0e_0+e_1e_0e_1e_0. \]
\end{ex}

\section{Main theorems}\label{sec:main}

For non-empty indices $\bk=(k_1,\ldots,k_r)$ and $\bl=(l_1,\ldots,l_s)$, 
we put 
\begin{equation}\label{eq:mu}
\mu(\bk,\bl)
=W\left(\ \begin{xy}
{(6,0)*++[o][F]{\bk}="k"},
{(20,6)*++[F]{\bl}="l"},
{(0,-6) \ar @{{*}-} "k"}, 
{"k" \ar @{-{o}} (12,6)}, 
{(12,6) \ar @{-} "l"}, 
\end{xy}
\ \right)\in\frH^0. 
\end{equation}
Here, the diagram on the right-hand side is a combination of 
the symbols introduced in Examples \ref{ex:MZV} and \ref{ex:MZSV} 
and represents the following: 
\[\begin{xy}
{(-3,-18) \ar @{{*}-} (0,-15)}, 
{(0,-15) \ar @{{o}.} (3,-12)}, 
{(3,-12) \ar @{{o}.} (9,-6)}, 
{(9,-6) \ar @{{*}-} (12,-3)}, 
{(12,-3) \ar @{{o}.} (15,0)}, 
{(15,0) \ar @{{o}-} (18,3)}, 
{(18,3) \ar @{{o}-} (21,6)}, 
{(21,6) \ar @{{o}.} (24,9)}, 
{(24,9) \ar @{{o}-} (27,3)}, 
{(27,3) \ar @{{*}-} (30,6)}, 
{(30,6) \ar @{{o}.} (33,9)}, 
{(33,9) \ar @{{o}-} (35,5)}, 
{(37,6) \ar @{.} (41,6)}, 
{(42,3) \ar @{{*}-} (45,6)}, 
{(45,6) \ar @{{o}.{o}} (48,9)}, 
{(-3,-17) \ar @/^1mm/ @{-}^{k_1} (2,-12)}, 
{(9,-5) \ar @/^1mm/ @{-}^{k_r} (14,0)}, 
{(18,4) \ar @/^1mm/ @{-}^{l_s} (23,9)}, 
{(28,3) \ar @/_1mm/ @{-}_{l_{s-1}} (33,8)}, 
{(43,3) \ar @/_1mm/ @{-}_{l_1} (48,8)}, 
\end{xy}\]
When $s=1$, we understand this as
\[\begin{xy}
{(-3,-18) \ar @{{*}-} (0,-15)}, 
{(0,-15) \ar @{{o}.} (3,-12)}, 
{(3,-12) \ar @{{o}.} (9,-6)}, 
{(9,-6) \ar @{{*}-} (12,-3)}, 
{(12,-3) \ar @{{o}.} (15,0)}, 
{(15,0) \ar @{{o}-} (18,3)}, 
{(18,3) \ar @{{o}-} (21,6)}, 
{(21,6) \ar @{{o}.} (24,9)}, 
{(24,9) \ar @{.{o}} (24,9)}, 
{(-3,-17) \ar @/^1mm/ @{-}^{k_1} (2,-12)}, 
{(9,-5) \ar @/^1mm/ @{-}^{k_r} (14,0)}, 
{(18,4) \ar @/^1mm/ @{-}^{l_1} (23,9)}
\end{xy}\]

Notice that the leftmost vertex of the `$\bl$-part', which is located between 
$\begin{xy}*+[Fo]{\bk}\end{xy}$ and $\begin{xy}*+[F]{\bl}\end{xy}$ 
in \eqref{eq:mu}, is $\circ$. 

\bigskip

Our first main theorem is the following identity which generalizes both \eqref{eq:SI MZV} 
and \eqref{eq:SI MZSV}. 

\begin{thm}\label{prop:SI}
For any non-empty indices $\bk$ and $\bl$, we have 
\begin{equation}\label{eq:SI}
\zeta\bigl(\mu(\bk,\bl)\bigr)=\zeta\bigl( \bk \circledast \bl^\star \bigr). 
\end{equation}
\end{thm}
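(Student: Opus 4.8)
The plan is to realize the left‑hand side as a 2‑poset integral, compute that integral as a nested series, and recognize that series as the one in \eqref{eq:cast} computing the right‑hand side. Since $\mu(\bk,\bl)=W(X)$ for the 2‑poset $X$ pictured in \eqref{eq:mu} and $I=\zeta\circ W$ on $\frP^0$, we have $\zeta\bigl(\mu(\bk,\bl)\bigr)=I(X)$; one checks first that $X$ is admissible for \emph{any} non‑empty indices $\bk,\bl$, the distinguished $\circ$ at the junction (the leftmost vertex of the $\bl$‑part) repairing admissibility there. As the right‑hand side of \eqref{eq:SI} equals, by \eqref{eq:cast}, the nested sum written there, the whole theorem reduces to the integral‑to‑series identity
\[
I(X)=\sum_{0<m_1<\cdots<m_r=n_s\ge\cdots\ge n_1>0}\frac{1}{m_1^{k_1}\cdots m_r^{k_r}\,n_1^{l_1}\cdots n_s^{l_s}}.
\]

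To prove this I would cut the integral at the vertex $\sigma$ which is the top of the $l_s$‑run of the $\bl$‑diagram. This $\sigma$ is the \emph{unique} vertex linking the ``main ascending chain'' of $X$ --- the totally ordered part carrying $\bk$, the junction $\circ$, and the whole $l_s$‑run --- to the ``descending zigzag'' carrying $l_{s-1},\dots,l_1$; hence fixing $t_\sigma=\rho$ factorizes the domain of integration, giving $I(X)=\int_0^1 P(\rho)\,\frac{d\rho}{\rho}\,Q(\rho)$, where $P(\rho)$ is the iterated integral over the main chain below $\sigma$ (with top $<\rho$) and $Q(\rho)$ the one over the zigzag (with its top valley $<\rho$). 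Expanding $\omega_1(t)=\sum_{m\ge1}t^{m-1}\,dt$ along the main chain identifies $P(\rho)$ with a truncated multiple zeta value, $P(\rho)=\sum_{0<m_1<\cdots<m_r}\rho^{m_r}/(m_1^{k_1}\cdots m_{r-1}^{k_{r-1}}m_r^{k_r+l_s-1})$. The crux is then the lemma
\[
\int_0^1\rho^{m-1}Q(\rho)\,d\rho=\frac1m\sum_{0<n_1\le\cdots\le n_{s-1}\le m}\frac{1}{n_1^{l_1}\cdots n_{s-1}^{l_{s-1}}},
\]
a truncated refinement of Yamamoto's zeta‑star expansion \eqref{eq:SI MZSV} (cf.\ \cite{Y}), which I would prove by induction on $s$, peeling the top valley and its run off the zigzag at each step. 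Substituting into $I(X)=\sum_{\vec m}(m_1^{k_1}\cdots m_{r-1}^{k_{r-1}}m_r^{k_r+l_s-1})^{-1}\!\int_0^1\rho^{m_r-1}Q(\rho)\,d\rho$ and setting $n_s:=m_r$ yields exactly the displayed sum.

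I expect the zigzag lemma to be the main obstacle. Because $X$ is not totally ordered, $\Delta_X$ is not a simplex: integrating out a maximal $\circ$‑vertex of the zigzag that lies over two smaller vertices produces $\int_{\max(\cdots)}^1 dt/t$, a logarithm of a maximum, and integrating out a valley $\bullet$ produces a \emph{difference} of powers. These differences both make neighbouring $l$‑indices coalesce --- which is how the sum over ``\,,\,'' versus ``$+$'' implicit in $\bl^\star$ gets generated --- and, upon telescoping, collapse everything onto the weak chain $n_1\le\cdots\le n_{s-1}\le m$; this collapsing is precisely the harmonic product rule used in the derivation of \eqref{eq:cast} (concretely, sums such as $\sum_{q\ge1}\bigl(q^{-2}(m+q)^{-1}+q^{-1}(m+q)^{-2}\bigr)=m^{-1}\sum_{j\le m}j^{-2}$). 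The degenerate cases $s=1$ (where $X$ is the totally ordered chain for $(k_1,\dots,k_{r-1},k_r+1)$, so \eqref{eq:SI} becomes $\zeta(k_1,\dots,k_{r-1},k_r+1)=\zeta\bigl(\bk\circledast(1)^\star\bigr)$) and those with some $l_j=1$ (a run collapsing to a single $\bullet$ whose connecting edge goes downward) should be treated separately but raise no new difficulty. Finally, it is worth noting that passing through $\zeta$ is genuinely necessary: $W(X)$ is in general \emph{not} equal to $\bk\circledast\bl^\star$ in $\frH^0$ --- already for $\bk=(1)$, $\bl=(1,2)$ one has $W(X)=2(1,3)+(2,2)$ while $\bk\circledast\bl^\star=(1,3)+(4)$, the resulting numerical identity being the weight‑$4$ sum formula $\zeta(1,3)+\zeta(2,2)=\zeta(4)$ --- so \eqref{eq:SI} really lives over $\R$ and is a source of honest relations among multiple zeta values.
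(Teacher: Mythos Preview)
Your proposal is correct and follows essentially the same route as the paper: the paper simply says to compute $I(X)$ ``as a repeated integral from left to right,'' first doing the $\bk$-part as in the usual iterated-integral proof of \eqref{eq:SI MZV} and then the $\bl$-part exactly as in the proof of the zeta-star integral formula in \cite{Y}, working through one explicit example in lieu of a general argument. Your version formalizes the same computation by cutting at the top $\sigma$ of the $l_s$-run and packaging the $\bl$-side into a ``zigzag lemma''; once you Fubini the $\rho$-integral in your lemma back down to the first valley, your induction step is literally the paper's left-to-right pass through one segment of the zigzag (the mechanism $\frac{1-t^m}{1-t}=\sum_{n=1}^m t^{n-1}$ visible in their example), so the two write-ups differ only in bookkeeping.

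One remark: the obstacle you anticipate---``logarithms of maxima'' from integrating a peak over $(\max(\cdot,\cdot),1)$, and telescoping identities like $\sum_{q\ge1}\bigl(q^{-2}(m+q)^{-1}+q^{-1}(m+q)^{-2}\bigr)=m^{-1}\sum_{j\le m}j^{-2}$---never actually materializes if you follow the left-to-right order (equivalently, your own inductive scheme for the lemma, integrating $\rho$ first, then the valley, then up the run to the next peak). In that order every peak is integrated only \emph{after} everything below it on its left has already been collapsed to a power $\tau^n$, so each peak integral is simply $\int_{\text{valley}}^1 \tau^{n-1}\,d\tau=(1-\text{valley}^n)/n$, and the truncated star-sum $\sum_{n_1\le\cdots\le n_{s-1}\le m}$ falls out directly with no partial-fraction detours. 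Your proof sketch is therefore sound but somewhat over-engineered relative to the paper's one-line instruction.
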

\begin{proof}
The proof is done straightforwardly by computing the multiple integral 
as a repeated integral ``from left to right.'' 
More specifically, one first computes the integral of the $\bk$-part 
as in the usual proof of \eqref{eq:SI MZV}, 
and then computes the integral of the $\bl$-part as in 
the proof of \cite[Theorem 1.2]{Y}.  We do not write out the details here, but the machinery
should be clear enough in the computation of the following example.
\end{proof}

\begin{ex}  
For $\bk=(1,1)$ and $\bl=(2,1)$, we have 
\[\mu(\bk,\bl)
=W\left(\ \begin{xy}
{(0,-4) \ar @{{*}-} (4,0)}, 
{(4,0) \ar @{{*}-} (8,4)}, 
{(8,4) \ar @{o-} (12,0)}, 
{(12,0) \ar @{{*}-o} (16,4)}, 
\end{xy}
\ \right)
=6e_1e_1e_3+2e_1e_2e_2+e_2e_1e_2\]
and 
\[\bk\circledast\bl^\star=e_1e_1\circledast e_2e_1+
e_1e_1\circledast e_3
=e_1e_2e_2+e_2e_1e_2+e_3e_2+e_1e_4. \]
Hence the identity \eqref{eq:SI} gives a linear relation
\[6\zeta(1,1,3)+2\zeta(1,2,2)+\zeta(2,1,2)
=\zeta(1,2,2)+\zeta(2,1,2)+\zeta(3,2)+\zeta(1,4). \]
The proof of \eqref{eq:SI} in this case proceeds as follows 
(here we omit the condition $0<t_i<1$ from the notation): 
\begin{align*}
\zeta\bigl(\mu(\bk,\bl)\bigr)
&=\int_{t_1<t_2<t_3>t_4<t_5}
\frac{dt_1}{1-t_1}\frac{dt_2}{1-t_2}\frac{dt_3}{t_3}\frac{dt_4}{1-t_4}\frac{dt_5}{t_5} 
=\int_{t_2<t_3>t_4<t_5}
\sum_{l=1}^\infty\frac{t_2^{l}}{l}\frac{dt_2}{1-t_2}\frac{dt_3}{t_3}
\frac{dt_4}{1-t_4}\frac{dt_5}{t_5}\\
&=\int_{t_3>t_4<t_5}
\sum_{l,m=1}^\infty\frac{t_3^{l+m}}{l(l+m)}\frac{dt_3}{t_3}\frac{dt_4}{1-t_4}\frac{dt_5}{t_5}
=\int_{t_4<t_5}
\sum_{l,m=1}^\infty\frac{1-t_4^{l+m}}{l(l+m)^2}\frac{dt_4}{1-t_4}\frac{dt_5}{t_5}\\
&=\int_{t_4<t_5}
\sum_{0<m_1<m_2}\frac{1}{m_1m_2^2}\sum_{n=1}^{m_2}t_4^{n-1}dt_4\frac{dt_5}{t_5}
=\int_0^1
\sum_{0<m_1<m_2}\frac{1}{m_1m_2^2}\sum_{n=1}^{m_2}\frac{t_5^n}{n}\frac{dt_5}{t_5}\\
&=\sum_{0<m_1<m_2\geq n>0}\frac{1}{m_1m_2^2n^2}
=\sum_{0<m_1<m_2=n_2\geq n_1>0}\frac{1}{m_1m_2n_1^2n_2}
=\zeta\bigl(\bk\circledast\bl^\star\bigr). 
\end{align*}
At each $\bullet$ in the $\bl$-part of the repeated integral, an integral of the form $\int_0^{t''}\int_{t'}^1\frac{t^n}{n^{k-1}}\frac{dt}{t}
\frac{dt'}{1-t'}$ appears and this gives the ``$\star$-part" sum $\sum_{1\le n'\le n}\frac1{n'n^k}$.

\end{ex}

The integral on the left-hand side of \eqref{eq:SI} is a sum of MZVs (sum over
all integrals associated to possible total-order extension of the 2-poset in \eqref{eq:mu},
see \cite[Corollary 2.4]{Y}), whereas the right-hand side is also a sum of MZVs
in the usual way.  Hence, for any given (non-empty) indices $\bk$ and $\bl$, 
the identity gives a linear relation among MZVs.  We remark that if the depth $s$ of $\bl$ is greater than 1, this linear relation 
is {\it always non-trivial} because whereas the depth of MZVs appear on the left are always the 
sum of depths of $\bk$ and $\bl$,  there must appear terms of lesser depth on the right when $s>1$.   
We conjecture that the totality of these relations \eqref{eq:SI} gives {\it all} linear relations among MZVs:

\begin{conj}\label{ISconj}
Any linear dependency of MZVs over $\Q$ can be deduced from \eqref{eq:SI}
with some $\bk$s and $\bl$s.
\end{conj}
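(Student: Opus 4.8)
For each weight $k$, let $R_k = \ker\zeta \cap \frH^0_k$ be the space of all $\Q$-linear relations among weight-$k$ MZVs, let $S_k \subseteq \frH^0_k$ be the $\Q$-span of the elements $\mu(\bk,\bl) - \bk\circledast\bl^\star$ over non-empty indices with $|\bk|+|\bl|=k$, and write $S = \sum_k S_k$. By Theorem \ref{prop:SI} each such element lies in $R_k$, so $S_k \subseteq R_k$ always, and Conjecture \ref{ISconj} is precisely the assertion that $S_k = R_k$ for every $k$. The plan is not to prove this outright --- as explained below it subsumes a famous open problem --- but to reduce it to that problem plus one combinatorial statement. First I would combine Theorems \ref{thm:equiv} and \ref{thm:shuffle=harmonic}: together they say that $S$, enlarged by the $\Q$-span of the harmonic product relations $\bk*\bl-\bk\bsh\bl$ (equivalently the shuffle product relations), equals the span of the extended double shuffle relations. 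Feeding this back into the inclusions $S_k \subseteq R_k$, one checks that $S_k = R_k$ holds for all $k$ \emph{if and only if} both of the following hold: (i) every harmonic (equivalently shuffle) product relation already lies in $S$; and (ii) the extended double shuffle relations exhaust $\ker\zeta$. So the task is to establish (i) and (ii).

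Sub-goal (i) is purely combinatorial: it asks that the product relations be realised inside the two-parameter family $\{\mu(\bk,\bl)\}$. I would attack it through the defining $2$-poset of $\mu(\bk,\bl)$ in \eqref{eq:mu}: its underlying poset is a ``fence'', and $W$ sends it to the sum over its linear extensions, so one would try to build the words $\bk\bsh\bl'$ encoding a product by forming telescoping $\Q$-combinations over suitable $\bk,\bl$, bootstrapping from the boundary cases $\bk=(1)$ and $\bl=(1)$ which by \S\ref{sec:notation} already produce shifted MZSVs and MZVs. Independently of (i), I would accumulate confirmation: verify $S_k = R_k$ numerically for small $k$ (say $k \le 16$), and re-derive from \eqref{eq:SI} the classical relation families --- duality, the sum formula, Hoffman's relations, the restricted sum formula of \S\ref{sec:sumformula} and Kawashima's relation of \S\ref{sec:kawashima} --- since the failure of any one of these to lie in $S$ would at once refute the conjecture.

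The genuine obstacle is (ii): it \emph{is} the extended double shuffle conjecture, one of the central open problems of the theory, for which no approach is currently in sight. Consequently Conjecture \ref{ISconj} cannot be proved by the methods available here, and what a paper can realistically deliver is the equivalence ``Conjecture \ref{ISconj} $\iff$ (extended double shuffle conjecture) $\wedge$ (product relations lie in $S$)'', where the first conjunct is the hard open problem and the second is the combinatorial sub-goal (i) one may hope to settle.
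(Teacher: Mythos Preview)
The statement is a \emph{conjecture}; the paper offers no proof. Its evidence is (a) a computer check that the relations \eqref{eq:SI} cut the weight-$k$ space down to the conjectural dimension for all $k\le 17$, and (b) the observation that Theorems~\ref{thm:equiv} and~\ref{thm:shuffle=harmonic} link \eqref{eq:FSI} to the extended double shuffle framework, which is itself conjectured to exhaust $\ker\zeta$. You correctly recognise that no proof is available and that the conjecture is at least as hard as the extended double shuffle conjecture; in this your proposal matches the paper's own stance.

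The gap is in your claimed equivalence ``Conjecture~\ref{ISconj} $\iff$ (i) $\wedge$ (ii)''. Theorems~\ref{thm:equiv} and~\ref{thm:shuffle=harmonic} are stated for maps $Z$ that are \emph{algebra homomorphisms} with respect to $\bsh$ or $*$; the hypothesis $Z(v\bsh w)=Z(v)Z(w)$ is strictly stronger than the linear condition $Z(v\bsh w - v*w)=0$, and the proofs (see e.g.\ \eqref{eq:Z(A_sh^star)}--\eqref{eq:rhoZ(A_*^star)}) genuinely use multiplicativity to factor $Z_\bullet$ of a shuffle or harmonic product. Hence these theorems do not yield the linear-span identity $S+\mathrm{span}\{\bk*\bl-\bk\bsh\bl\}=E$ on which your reduction rests. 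In particular neither $S\subseteq E$ nor $E\subseteq S+DS$ is established, so neither direction of your ``iff'' is justified as written; the paper is accordingly more cautious, describing the theorems only as ``strong support''. Your sub-goal (i) --- that the finite double shuffle relations already lie in $S$ --- is likewise not treated in the paper and does not obviously reduce to the poset combinatorics you sketch. (As a smaller point: your list of relations to re-derive includes duality, but the paper does not derive duality from \eqref{eq:SI}; in \S\ref{sec:kawashima} duality is taken as an additional assumption.)
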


We checked by computer up to weight $17$ that \eqref{eq:SI} is enough to reduce 
the dimension of the space of MZVs of fixed weight to the well-known conjectural dimension.  
In addition to the numerical evidence, in view of the widely believed conjecture that the double shuffle relation \eqref{doubleshuffle} 
and the regularization theorem \eqref{regfund} describe all algebraic and linear relations of MZVs, 
our theorems below (Theorem \ref{thm:equiv} and Theorem \ref{thm:shuffle=harmonic})
give a strong support to our conjecture.
We would like to stress that equation \eqref{eq:SI} is a completely elementary identity 
between convergent integral and sum, without any process of regularization, 
even for non-admissible $\bk$ or $\bl$. 

Our second main theorem claims that the identity \eqref{eq:SI} is, 
in a suitable sense, equivalent to the fundamental theorem of regularization 
\eqref{regfund}.  We also formulate the zeta-star version of \eqref{regfund} 
and show that this too is equivalent. To state the theorem, 
we first introduce a formal setting. 

\begin{defn}
Let $Z\colon\frH^0\to\R$ be a $\Q$-linear map. 
We say that $Z$ satisfies the \emph{double shuffle relation} 
if it is both a $\Q$-algebra homomorphism 
from $\frH_\sh^0$ and from $\frH_*^0$, i.e., the relation 
\[Z(v\bsh w)=Z(v*w)=Z(v)Z(w)\]
holds for any $v,w\in\frH^0$. 
\end{defn}
This is of course modeled after our MZV-evaluation map 
$\zeta\colon\frH^0\to\R$ satisfying \eqref{doubleshuffle}. 

Suppose that $Z$ satisfies the double shuffle relation. 
Then, just as in the case of $\zeta$, 
we can extend the map $Z$ uniquely to $\Q$-algebra homomorphisms 
$Z_\sh\colon\frH_\sh^1\to\R[T]$ and $Z_*\colon\frH^1_*\to\R[T]$ 
by setting $Z_\sh(e_1)=Z_*(e_1)=T$. 
The maps $Z_\sh$ and $Z_*$ are called the \emph{shuffle} and 
\emph{harmonic regularization} of $Z$, respectively. 
When we need to make the variable $T$ explicit, 
we write $Z_\bullet(w;T)$ for $Z_\bullet(w)\in\R[T]$, 
where $\bullet=\sh$ or $*$, and we often use the notation $Z_\bullet(\bk;T)$ 
for $Z_\bullet(w;T)$ with an index $\bk$ corresponding to $w$.

\smallskip 

Let us introduce  the `star-version' of regularizations. 
The star-harmonic regularization is defined by 
\[\Zbar_*(w;T)=Z_*(w^\star;T)\qquad (w\in\frH^1). \]
On the other hand, the star-shuffle regularization 
$\Zbar_\sh\colon\frH^1\to\R[T]$ is defined differently by 
\[\Zbar_\sh(w;T)=\Zbar_\sh(\bk;T)=Z_\sh\circ W\left(\ \begin{xy}
{(0,0) \ar@{{*}-} (5,0) *+[F]{\bk}}
\end{xy}\ \right),\]
where $\bk$ is the index which corresponds to a word $w$. 

\begin{rem}
If $w\in\frH^0$, we have $\Zbar_*(w)=Z_*(w^\star)=Z(w^\star)$ by definition. 
On the other hand, the identity $\Zbar_\sh(w)=Z_\sh(w^\star)$ 
does \emph{not} hold in general, even if $w$ is in $\frH^0$. 
Our star-shuffle regularization is therefore 
different from the one previously adopted in, for instance, \cite{M}.
\end{rem}

We define the $\R$-linear maps $\rho_Z$ and $\rhobar_Z$ on $\R[T]$ by 
\[\rho_Z(e^{Tu})=A_Z(u)e^{Tu},\quad 
\rhobar_Z(e^{Tu})=A_Z(-u)^{-1}e^{Tu}\]
where 
\[A_Z(u)=\exp\Biggl(\sum_{n=2}^\infty
\frac{(-1)^n}{n}Z(e_n)u^n\Biggr)\in\R[[u]]. \]
In particular, the map $\rho_\zeta$ corresponding to 
the MZV-evaluation map $\zeta$ is exactly the map $\rho$ defined by \eqref{rho}. 

\begin{thm}\label{thm:equiv}
Suppose a $\Q$-linear map $Z\colon\frH^0\to\R$ satisfies the double shuffle relation. 
Then the following three properties of $Z$ are equivalent: 
\begin{alignat}{2}
\label{eq:Reg}
Z_\sh(\bk;T)&=\rho_Z\bigl(Z_*(\bk;T)\bigr)
&&\text{\ for any index $\bk$}, 
\tag{Reg}\\
\label{eq:Reg^star}
\Zbar_\sh(\bk;T)&=\rhobar_Z\bigl(\Zbar_*(\bk;T)\bigr)
&&\text{\ for any index $\bk$}, 
\tag{$\text{Reg}^\star$}\\
\label{eq:FSI}
Z\bigl(\mu\bigl(\bk,\bl\bigr)\bigr)&=Z\bigl(\bk\circledast \bl^\star\bigr)
&&\text{\ for any non-empty indices $\bk$ and $\bl$}. 
\tag{Int-Ser}
\end{alignat}
\end{thm}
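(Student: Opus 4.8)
The plan is to prove the cyclic chain of implications
\eqref{eq:Reg}\ $\Rightarrow$\ \eqref{eq:Reg^star}\ $\Rightarrow$\ \eqref{eq:FSI}\ $\Rightarrow$\ \eqref{eq:Reg},
so that each direction uses only elementary manipulations of the regularizations together with the product formulas already built into the hypothesis that $Z$ satisfies the double shuffle relation.

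\textbf{Step 1: reformulate $\mu$ and $\circledast$ via regularizations.}
First I would express both sides of \eqref{eq:FSI} purely in terms of $Z_\sh$, $Z_*$ and the star-regularizations evaluated at $T=0$.
For the right-hand side, the definition $ve_k\circledast we_l=(v*w)e_{k+l}$ shows that $Z(\bk\circledast\bl^\star)$ is obtained from the product $Z_*(\rev{\bk'}\text{-type word})\cdot Z_*(\bl'^\star)$ by an ``index-lowering'' operation on the last slot; concretely, writing $\bk=(\bk',k_r)$ and $\bl=(\bl',l_s)$, one has $Z(\bk\circledast\bl^\star)$ equal to the coefficient extraction that merges the two tails into a single $e_{k_r+l_s}$, and this can be packaged as the constant term of a product of a harmonic-regularized MZV-type polynomial and a star-harmonic-regularized one.
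For the left-hand side, $W$ applied to the diagram in \eqref{eq:mu} is, by the shuffle-recursion \eqref{eq:shuffle for W}, exactly the shuffle product inside $\frH_\sh$ of $W$ of the ``$\bk$-part'' (a totally ordered admissible diagram giving $e_{k_1}\cdots e_{k_r}$) with $W$ of the ``$\bl$-part'' attached above; and the $\bl$-part, by Example \ref{ex:MZSV}, is precisely the $\star$-diagram whose leftmost node has been forced to be $\circ$. Thus $Z(\mu(\bk,\bl))$ is a shuffle-type expression combining $\bk$ with the star-shuffle regularization $\Zbar_\sh$ of $\bl$. The upshot of Step 1 is to rewrite \eqref{eq:FSI} as an identity of the schematic form
\[
Z_\sh(\bk)\,\bar*\,\Zbar_\sh(\bl)\big|_{\text{merge tails}}
\;=\;
Z_*(\bk)\,\bar*\,\Zbar_*(\bl)\big|_{\text{merge tails}},
\]
i.e.\ a statement comparing a shuffle-side and a harmonic-side evaluation of ``the same'' combinatorial object.

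\textbf{Step 2: the implication \eqref{eq:Reg}$\Leftrightarrow$\eqref{eq:Reg^star}.}
Here I would show that the star-versions of the regularization theorem are a formal consequence of the original ones, using the definitions $\Zbar_*(w;T)=Z_*(w^\star;T)$ and $\Zbar_\sh(w;T)=Z_\sh\circ W(\bullet\text{-}\!\fbox{$\bk$})$.
The key is that the map $w\mapsto w^\star$ and the map sending $w$ to the totally-ordered-with-leading-$\bullet$ word are both expressible through the antipode-like operators on $\frH$, and the relation between $\rho_Z$ and $\rhobar_Z$ is exactly $A_Z(u)\leftrightarrow A_Z(-u)^{-1}$, which is the factor that appears when one passes from $\zeta$-words to $\zeta^\star$-words (this is the standard observation that $\sum(-1)^n\zeta(n)u^n/n$ and its negative are interchanged under $\star$-duality of the generating series).
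So \eqref{eq:Reg^star} should follow from \eqref{eq:Reg} by applying \eqref{eq:Reg} to a suitable family of words and then conjugating by the $\star$-operator; and conversely \eqref{eq:Reg}$\Leftarrow$\eqref{eq:Reg^star} because the $\star$-operator is invertible.
This part is essentially bookkeeping with generating functions in $\R[T][[u]]$ and I expect no real obstacle.

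\textbf{Step 3: closing the loop \eqref{eq:Reg}$+$\eqref{eq:Reg^star}$\Leftrightarrow$\eqref{eq:FSI}, and the main difficulty.}
Given Steps 1–2, what remains is to match the reformulated \eqref{eq:FSI} from Step 1 with the conjunction of \eqref{eq:Reg} and \eqref{eq:Reg^star}.
The strategy is: use \eqref{eq:Reg} to replace $Z_\sh(\bk)$ by $\rho_Z(Z_*(\bk))$ on the left, use \eqref{eq:Reg^star} to replace $\Zbar_\sh(\bl)$ by $\rhobar_Z(\Zbar_*(\bl))$, and then check that the two ``$\rho$-factors'' $A_Z(u)$ and $A_Z(-u)^{-1}$ \emph{cancel} once the two tails are merged into the single letter $e_{k_r+l_s}$ — because merging forces the shuffle/harmonic ambiguity to collapse, and the surviving correction factor is $A_Z(u)\cdot A_Z(-u)^{-1}$ acting in a way that, after the tail-merge specialization, reduces to the identity on the relevant coefficient.
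Conversely, to get \eqref{eq:Reg} from \eqref{eq:FSI}, I would take $\bl=(1)$ (so that $\bl^\star=e_1$, the minimal non-admissible generator): then by the identities displayed just before \S\ref{sec:poset}, $Z(\bk\circledast(1)^\star)=Z(k_1,\dots,k_{r-1},k_r+1)$ and $Z(\mu(\bk,(1)))$ is exactly $\Zbar_\sh$-type data for $\bk$, so \eqref{eq:FSI} specialized to $\bl=(1)$ and varying $\bk$ produces enough relations among the coefficients of $Z_\sh(\,\cdot\,;T)$ and $Z_*(\,\cdot\,;T)$ to recover the full strength of \eqref{eq:Reg} — one runs an induction on the depth of $\bk$, peeling off the last letter.
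The hard part will be Step 3's cancellation of the $A_Z(\pm u)$ factors under the tail-merging: one must verify that the combinatorics of $\circledast$ (which lowers total depth by one) is precisely compensated by the passage from $\rho_Z$ to $\rhobar_Z$, and getting the signs and the direction of the $\star$-involution exactly right in $\R[T][[u]]$ is where all the genuine content sits; the depth-$(1)$ specialization and the induction, by contrast, are routine once that identity is in hand.
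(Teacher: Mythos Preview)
Your proposal has two concrete gaps that would prevent it from closing.

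First, Step~1 rests on the claim that $W$ applied to the diagram in \eqref{eq:mu} is ``the shuffle product of $W$ of the $\bk$-part with $W$ of the $\bl$-part.'' That is false: the 2-poset defining $\mu(\bk,\bl)$ is \emph{connected}---the top vertex of the $\bk$-chain is the leftmost $\circ$ of the $\bl$-diagram---so $W$ does not split as a shuffle product, and $Z(\mu(\bk,\bl))$ cannot be written as $Z_\sh(\bk)\cdot(\text{something in }\bl)$. The schematic identity you write down at the end of Step~1 therefore has no precise meaning, and the ``cancellation of $A_Z(\pm u)$ factors under tail-merging'' in Step~3 never gets off the ground.

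Second, your proposed route \eqref{eq:FSI}$\Rightarrow$\eqref{eq:Reg} via the specialization $\bl=(1)$ yields nothing: when $s=1$ one has $\mu(\bk,(1))=(k_1,\ldots,k_{r-1},k_r+1)=\bk\circledast(1)^\star$ identically in $\frH^0$, so \eqref{eq:FSI} with $\bl=(1)$ is a tautology and cannot recover \eqref{eq:Reg}. (The paper remarks after Theorem~\ref{prop:SI} that \eqref{eq:SI} is non-trivial only when $s>1$.)

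What the paper actually does is quite different from your outline. It never proves \eqref{eq:Reg}$\Leftrightarrow$\eqref{eq:Reg^star} directly; instead it shows \eqref{eq:Reg}$\Leftrightarrow$\eqref{eq:FSI} and \eqref{eq:Reg^star}$\Leftrightarrow$\eqref{eq:FSI} by parallel arguments. The engine is a set of four \emph{telescoping} identities in $\frH^1$ (Lemma~\ref{lem:algebraic identities}): for instance
\[
\sum_{i=0}^{s-1}(-1)^i\,\mu(\bk,\bl^i)\bsh\rev{\bl_i}+(-1)^s\,\bk\odot\rev{\bl}=0,
\qquad
\sum_{i=0}^{s-1}(-1)^i\,\bigl(\bk\circledast(\bl^i)^\star\bigr)*\rev{\bl_i}+(-1)^s\,\bk\odot\rev{\bl}=0.
\]
Applying $Z_\sh$ (resp.\ $Z_*$) and using the shuffle (resp.\ harmonic) product rule, one gets two recursions with the \emph{same} inhomogeneous term $Z(\bk\odot\rev{\bl})$; applying $\rho_Z$ to the harmonic one and invoking \eqref{eq:Reg} lets you compare them term by term, and induction on depth gives \eqref{eq:FSI}. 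For the converse, one specializes not $\bl$ but $\bk$ to $(1,\ldots,1)$, and uses the generating-series identities of Lemma~\ref{lem:exp_*} to handle $Z_\bullet((1,\ldots,1);T)$ explicitly. Your proposal is missing exactly this telescoping mechanism, which is what substitutes for the (non-existent) product factorization of $\mu(\bk,\bl)$.
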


We refer to the above three properties as
the \emph{regularization theorem}, the \emph{star-regularization theorem} 
and the \emph{integral-series identity} for $Z$, respectively.  
The proof of Theorem \ref{thm:equiv} is carried out in the next section. 

Because we know the properties \eqref{eq:Reg} and \eqref{eq:FSI} hold
for $Z=\zeta$, we obtain the star-regularization theorem for multiple zeta-star values:

\begin{cor} For any index $\bk$, we have
\[\zeta^\star_\sh(\bk;T)
=\rhobar_\zeta\bigl(\zeta^\star_*(\bk;T)\bigr). \]
\end{cor}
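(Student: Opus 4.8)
The plan is to derive the corollary directly from Theorem~\ref{thm:equiv} applied to the MZV-evaluation map $Z=\zeta$. The key observation is that $\zeta$ satisfies the double shuffle relation \eqref{doubleshuffle}, so Theorem~\ref{thm:equiv} applies to it; moreover, we already know that property \eqref{eq:Reg} holds for $\zeta$ (this is Theorem~\ref{thm:IKZ}, since $\rho_\zeta=\rho$). By the equivalence asserted in Theorem~\ref{thm:equiv}, property \eqref{eq:Reg^star}, namely the star-regularization theorem, must therefore also hold for $\zeta$. Concretely, \eqref{eq:Reg^star} for $Z=\zeta$ reads $\Zbar_\sh(\bk;T)=\rhobar_\zeta\bigl(\Zbar_*(\bk;T)\bigr)$ for any index $\bk$.

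What remains is to unwind the definitions and identify $\Zbar_\sh$ and $\Zbar_*$ (for $Z=\zeta$) with $\zeta^\star_\sh$ and $\zeta^\star_*$ respectively. First I would recall that, by definition of the star-harmonic regularization, $\Zbar_*(w;T)=Z_*(w^\star;T)$; specializing to $Z=\zeta$ gives $\zeta^\star_*(\bk;T):=\zeta_*(\bk^\star;T)$, which is exactly the harmonic regularization of the MZSV-evaluation map, consistent with $\zeta^\star(\bk)=\zeta(\bk^\star)$ for admissible $\bk$. Second, for the shuffle side, the definition sets $\Zbar_\sh(\bk;T)=Z_\sh\circ W\bigl(\begin{xy}{(0,0) \ar@{{*}-} (5,0) *+[F]{\bk}}\end{xy}\bigr)$; by Example~\ref{ex:MZSV} (equation \eqref{eq:SI MZSV}), the 2-poset $\begin{xy}{(0,0) \ar@{{*}-} (5,0) *+[F]{\bk}}\end{xy}$ has associated integral $\zeta^\star(\bk)$ when $\bk$ is admissible, and in general $Z_\sh\circ W$ of this diagram is the natural shuffle-regularized extension to all indices; this is precisely what we call $\zeta^\star_\sh(\bk;T)$.

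Putting these identifications together, \eqref{eq:Reg^star} for $Z=\zeta$ becomes exactly the claimed identity $\zeta^\star_\sh(\bk;T)=\rhobar_\zeta\bigl(\zeta^\star_*(\bk;T)\bigr)$, completing the proof. I expect no genuine obstacle here: the corollary is a formal consequence of Theorem~\ref{thm:equiv} together with the known fact \eqref{eq:Reg} for $\zeta$, and the only care needed is in matching notation—verifying that the abstract star-regularizations $\Zbar_\sh$, $\Zbar_*$ specialize to the star-regularizations of MZSVs as they are conventionally understood. The substantive work is entirely contained in Theorem~\ref{thm:equiv}, whose proof is deferred to \S\ref{sec:proof}; here one simply invokes it.
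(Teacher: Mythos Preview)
Your proposal is correct and follows essentially the same approach as the paper: the corollary is obtained by specializing Theorem~\ref{thm:equiv} to $Z=\zeta$, using that $\zeta$ satisfies the double shuffle relation and that one of the three equivalent properties is already known. The paper's justification is even terser---it notes that both \eqref{eq:Reg} (Theorem~\ref{thm:IKZ}) and \eqref{eq:FSI} (Theorem~\ref{prop:SI}) hold for $\zeta$, hence so does \eqref{eq:Reg^star}---and treats the notations $\zeta^\star_\sh$, $\zeta^\star_*$ as by definition the specializations of $\Zbar_\sh$, $\Zbar_*$ to $Z=\zeta$, so your unwinding of definitions is not strictly needed but does no harm.
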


As for the implication  \eqref{eq:FSI} $\Rightarrow$ \eqref{eq:Reg}, it turns out that only `single' shuffle
relation is sufficient, i.e., the following holds.

\begin{thm}\label{thm:shuffle=harmonic}
Suppose that a $\Q$-linear map $Z\colon\frH^0\to\R$ satisfies 
the integral-series identity. Then $Z$ satisfies the shuffle relation $Z(\bk\bsh\bl)=Z(\bk)Z(\bl)$
if and only if $Z$ satisfies the harmonic relation $Z(\bk*\bl)=Z(\bk)Z(\bl)$. 
\end{thm}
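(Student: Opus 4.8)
The plan is to exploit the already-established equivalence \eqref{eq:FSI} $\Leftrightarrow$ \eqref{eq:Reg} in Theorem \ref{thm:equiv}, together with the symmetry of \eqref{eq:Reg} under the interchange of the two regularizations. First I would observe that the statement of Theorem \ref{thm:equiv} is \emph{not} directly applicable, since it assumes at the outset that $Z$ satisfies the full double shuffle relation; here we are only given \eqref{eq:FSI} and \emph{one} of the two product formulas. So the first task is to isolate, from the proof of Theorem \ref{thm:equiv} given in \S\ref{sec:proof}, exactly which product formula is needed at which point. The key point I expect to extract is that the implication \eqref{eq:FSI} $\Rightarrow$ ``$Z$ is a $\sh$-homomorphism on $\frH^0_\sh$'' does not itself require the harmonic relation as an input: the shuffle product appears in \eqref{eq:FSI} only through the map $W$ (which lands in $\frH_\sh$) and through $\mu(\bk,\bl)$, and the series side $\bk\circledast\bl^\star$ is built from $*$ but evaluated by $Z$ restricted to $\frH^0$, where $*$-multiplicativity is not presupposed.

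The core of the argument is the following chain. Suppose $Z$ satisfies \eqref{eq:FSI} and the shuffle relation $Z(v\bsh w)=Z(v)Z(w)$ for $v,w\in\frH^0$. Then $Z$ extends to the shuffle regularization $Z_\sh\colon\frH^1_\sh\to\R[T]$ exactly as in the discussion preceding the theorem (this extension uses \emph{only} the shuffle algebra structure $\frH^1_\sh\cong\frH^0_\sh[e_1]$, not $*$). I would then show that \eqref{eq:FSI} forces a ``harmonic-type'' recursion on the values $Z(\bk)$: specializing $\bk\circledast\bl^\star$ to the cases recorded after \eqref{eq:cast}, namely $Z(\bk\circledast(1)^\star)=Z(k_1,\dots,k_{r-1},k_r+1)$ and $Z((1)\circledast\bl^\star)=Z^\star$-type values, gives enough leverage; more systematically, the identity \eqref{eq:FSI} with $\bl$ of depth one already pins down the interaction between $Z$ and the operation $e_k\mapsto e_{k+1}$, and iterating in the depth of $\bl$ reconstructs the full stuffle product of two admissible words inside $Z$. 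The cleanest route is probably to run the proof of Theorem \ref{thm:equiv} verbatim but tracking hypotheses: that proof presumably derives \eqref{eq:Reg} from \eqref{eq:FSI}-plus-double-shuffle, and I expect that a careful reading shows it actually derives \eqref{eq:Reg} from \eqref{eq:FSI}-plus-shuffle alone. Once \eqref{eq:Reg} holds, one reads off the harmonic relation: \eqref{eq:Reg} says $Z_*=\rho_Z^{-1}\circ Z_\sh$ as maps $\frH^1\to\R[T]$, and since $\rho_Z$ is an $\R$-algebra automorphism of $\R[T]$ (being of the form $\exp$ of a derivation-like operator—more precisely $\rho_Z(e^{Tu})=A_Z(u)e^{Tu}$ makes $\rho_Z$ multiplicative in the relevant sense as in \cite{IKZ}) and $Z_\sh$ is a $\sh$-homomorphism, the resulting $Z_*$ is a $*$-homomorphism; restricting to $\frH^0$ gives $Z(v*w)=Z(v)Z(w)$. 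The converse direction (harmonic $\Rightarrow$ shuffle) is symmetric: run the same analysis through \eqref{eq:Reg^star}, or simply invoke that \eqref{eq:Reg} is an equivalence relating $Z_\sh$ and $Z_*$ symmetrically, so that knowing $Z$ is a $*$-homomorphism plus \eqref{eq:FSI} yields \eqref{eq:Reg} and hence $Z_\sh$, restricted to $\frH^0$, is a $\sh$-homomorphism.

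Concretely I would structure the write-up as: (i) assume \eqref{eq:FSI} throughout; (ii) prove ``shuffle $\Rightarrow$ harmonic'' by checking that the proof of \eqref{eq:FSI} $\Rightarrow$ \eqref{eq:Reg} in \S\ref{sec:proof} uses the harmonic product formula for $Z$ nowhere (only the shuffle one, to build $Z_\sh$), then deduce the harmonic relation from \eqref{eq:Reg} as above; (iii) prove ``harmonic $\Rightarrow$ shuffle'' either by the symmetric argument via \eqref{eq:Reg^star}, or by noting that \eqref{eq:Reg} also admits a proof from \eqref{eq:FSI} using only the harmonic product formula (the two sides of \eqref{eq:FSI} being visibly symmetric enough in the roles of $\sh$ and $*$ that whichever regularization one can form suffices to run the argument and recover \eqref{eq:Reg}), whence $Z_\sh$ is forced to be a $\sh$-homomorphism.

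\textbf{Main obstacle.} The hard part will be step (ii): verifying that the proof of Theorem \ref{thm:equiv} genuinely needs only \emph{one} product formula at the stage where it derives \eqref{eq:Reg} from \eqref{eq:FSI}. The statement of Theorem \ref{thm:equiv} is packaged under the blanket assumption of double shuffle, so the dependency is not visible from the statement alone; one must re-examine the construction of $Z_\sh$ and the manipulation of $\mu(\bk,\bl)$ and confirm that the $*$-multiplicativity of $Z$ is never invoked — in particular that the series side $Z(\bk\circledast\bl^\star)$ is treated purely as ``$Z$ applied to a specific element of $\frH^0$'' rather than being expanded via the stuffle product of $Z$-values. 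If it turns out that the \S\ref{sec:proof} argument does secretly use both, the fallback is to give a direct proof that \eqref{eq:FSI} forces the stuffle recursion on $\{Z(\bk)\}$ by induction on the total depth, using the depth-one-in-$\bl$ case of \eqref{eq:FSI} as the inductive engine; this is more hands-on but self-contained.
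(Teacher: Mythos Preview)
Your main line of attack has a fatal flaw: the map $\rho_Z$ is \emph{not} an $\R$-algebra automorphism of $\R[T]$. It is only $\R$-linear. From $\rho_Z(e^{Tu})=A_Z(u)e^{Tu}$ one computes, for instance, $\rho_Z(T^2)=T^2+Z(e_2)$, whereas $\rho_Z(T)^2=T^2$; so $\rho_Z$ does not respect products. Consequently, even if you had \eqref{eq:Reg} in hand and defined $Z_*:=\rho_Z^{-1}\circ Z_\sh$, there is no reason whatsoever for this composite to be a $*$-homomorphism: you would be composing a $\sh$-homomorphism with a non-multiplicative linear map. The ``relevant sense'' you allude to in \cite{IKZ} does not exist; the whole content of the regularization theorem is precisely that two \emph{different} algebra structures are intertwined by a merely linear map. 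There is also a more basic circularity: the very statement of \eqref{eq:Reg} presupposes that $Z_*$ exists as a $*$-algebra map $\frH^1_*\to\R[T]$, and the construction of $Z_*$ (via $\frH^1_*\cong\frH^0_*[e_1]$) already requires $Z$ to be a $*$-homomorphism on $\frH^0$. So you cannot ``derive \eqref{eq:Reg} from \eqref{eq:FSI}-plus-shuffle alone'' and then read off the harmonic relation from it.

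The paper's proof avoids regularization entirely and is much closer to your fallback. It works directly with the algebraic identities \eqref{eq:A_sh} and \eqref{eq:A_*} of Lemma~\ref{lem:algebraic identities}. Given admissible $\bk,\bl$ (with $\bl$ of depth $s$), one applies those identities to a suitably shifted pair $(\tilde{\bk},\hat{\bl})$ so that every $\rev{\hat{\bl}_i}$ appearing is itself admissible. Applying $Z$ to both identities, one uses the assumed shuffle relation to factor the $\bsh$-products in \eqref{eq:A_sh}, and the induction hypothesis (harmonic relation for second factor of depth $<s$) to factor the $*$-products in \eqref{eq:A_*}. The integral-series identity \eqref{eq:FSI} then matches all terms for $i=0,\ldots,s-1$, forcing equality of the remaining $i=s$ terms, which unwinds to $Z(\bk)Z(\bl)=Z(\bk*\bl)$. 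The other direction is symmetric. This is the ``direct induction using \eqref{eq:FSI} as the engine'' that you gestured at, but the crucial ingredient you were missing is the pair of telescoping identities \eqref{eq:A_sh}, \eqref{eq:A_*}, which let you compare a $\bsh$-expansion and a $*$-expansion with a common boundary term $\bk\odot\rev{\bl}$.
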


The proof is also given in the next section.

\section{Proof of Theorem \ref{thm:equiv} and Theorem \ref{thm:shuffle=harmonic}}\label{sec:proof}
\subsection{Algebraic preliminaries}

To prove Theorem \ref{thm:equiv}, we need some identities 
which are purely algebraic (i.e., hold in $\frH^1$). 

\begin{lem}\label{lem:exp_*}
We have the following identities in the ring of formal power series 
$\frH^1_*[[u]]$:
\begin{align*}
\sum_{n=0}^\infty e_1^nu^n
&=\exp_*\Biggl(\sum_{n=1}^\infty (-1)^{n-1}\frac{e_n}{n}u^n\Biggr)\\
\intertext{and}
\sum_{n=0}^\infty (e_1^n)^\star u^n
&=\exp_*\Biggl(\sum_{n=1}^\infty \frac{e_n}{n}u^n\Biggr),
\end{align*}
where $exp_*$ indicates that the products of coefficients are $*$-products.
\end{lem}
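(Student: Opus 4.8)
The plan is to prove both identities in $\frH^1_*[[u]]$ by relating the generating functions of $e_1^n$ and $(e_1^n)^\star$ to exponentials in the $*$-algebra. First I would observe that the harmonic product recursion applied to $e_1 * e_1^n$ gives a clean relation: from $e_1 v * e_1 w = e_1(v*e_1w) + e_1(e_1v*w) + e_2(v*w)$ with $v = e_1^{n-1}$, $w = 1$ one extracts a recursion expressing $e_1^{n} \cdot (\text{coefficient})$ in terms of lower $e_1^j$ times $e_n$'s. A cleaner route is to note that $\frH^1_*$ is a polynomial algebra and to compute the $*$-exponential directly. Set $L(u) = \sum_{n\ge 1} (-1)^{n-1} \tfrac{e_n}{n} u^n$ and $G(u) = \sum_{n\ge 0} e_1^n u^n$; I want $G = \exp_*(L)$, equivalently $\tfrac{d}{du} G = (\tfrac{d}{du} L) * G$ since $\exp_*$ behaves like the usual exponential under the $*$-derivation. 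So the key step is to verify the differential identity
\[
\sum_{n\ge 1} n e_1^n u^{n-1} = \Biggl(\sum_{n\ge 1} (-1)^{n-1} e_n u^{n-1}\Biggr) * \Biggl(\sum_{m\ge 0} e_1^m u^m\Biggr),
\]
i.e. comparing coefficients of $u^{N-1}$: $N e_1^N = \sum_{n+m=N,\ n\ge 1} (-1)^{n-1} e_n * e_1^m$.

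Thus the heart of the matter reduces to the purely combinatorial identity $N e_1^N = \sum_{n=1}^{N} (-1)^{n-1} e_n * e_1^{N-n}$ in $\frH^1_*$, which I would prove by induction on $N$ using the defining recursion for $*$. Expanding $e_n * e_1^{N-n} = e_n * (e_1 \cdot e_1^{N-n-1})$ via $e_k v * e_l w$ with $k = n$, $l = 1$ (writing $e_n = e_n \cdot 1$ when convenient, or better $e_1^{N-n} = e_1 \cdot e_1^{N-n-1}$ so that the leftmost letter of the second factor is $e_1$) yields three terms: one beginning with $e_n$, one beginning with $e_1$, and one beginning with $e_{n+1}$. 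Summing over $n$ with signs $(-1)^{n-1}$, the $e_n$-headed and $e_{n+1}$-headed terms telescope, and the $e_1$-headed terms assemble, via the induction hypothesis for $N-1$, into $N e_1^N$. Keeping track of the telescoping boundary terms (the $n=N$ term and the $e_{N+1}$ that cannot appear) is the only place one must be careful, but it is routine.

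For the star version, the argument is parallel with $(e_1^n)^\star$ in place of $e_1^n$ and $M(u) = \sum_{n\ge 1} \tfrac{e_n}{n} u^n$ (no sign) in place of $L(u)$. I would either repeat the induction — using that $(e_1^n)^\star = \sum_{j} e_1^{?}$ unpacks via $(k_1 \bigcirc \cdots \bigcirc k_r)$ into a sum of compositions, and that the recursion for $*$ interacts with $\star$ in a controlled way — or, more slickly, derive the star identity from the first one. The cleanest derivation: the two power series $\exp_*(L(u))$ and $\exp_*(M(u))$ satisfy $\exp_*(L(u)) *_? \exp_*(M(u)) = \exp_*(L(u)+M(u))$; since $L(u) + M(u) = 2\sum_{n \text{ even}} \tfrac{e_n}{n} u^n$ has only even-degree terms, and separately $L(-u) = -M(u)$ essentially, one gets a relation between $\sum (e_1^n)^\star u^n$ and $\sum e_1^n (-u)^n$ that pins down the star generating function. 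I expect the main obstacle to be bookkeeping in the inductive telescoping — in particular getting the signs and the indexing of the boundary terms exactly right — rather than any conceptual difficulty; alternatively, if one derives the second identity from the first, the obstacle shifts to justifying $\exp_*(A)*\exp_*(B) = \exp_*(A+B)$ coefficientwise and the relation $(\sum e_1^n u^n)^\star \leftrightarrow$ the star-operator applied termwise, which again is a routine check once stated carefully.
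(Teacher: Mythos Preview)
Your argument for the first identity is correct: the recursion $N\,e_1^N=\sum_{n=1}^N(-1)^{n-1}e_n*e_1^{N-n}$ is precisely Newton's identity relating elementary symmetric functions to power sums, transported to $\frH^1_*$, and your telescoping induction establishes it cleanly. The paper takes a different route, giving no self-contained argument at all: it simply identifies both formulas with the classical symmetric-function identities (elementary, resp.\ complete, in terms of power sums) via Hoffman's isomorphism with QSym, cites \cite{H} and \cite{IKZ}, and remarks that the second follows from the first by applying the antipode of the QSym Hopf algebra. Your approach is more elementary and stays entirely inside $\frH^1_*$; the paper's is shorter but relies on outside references.

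Your treatment of the second identity, however, is not yet a proof. The slick derivation you gesture at can be made precise: since $L(-u)=-M(u)$, the first identity gives $\exp_*\bigl(M(u)\bigr)=\bigl(\sum_{n\ge0}(-1)^n e_1^n u^n\bigr)^{*\,-1}$, so what remains is the convolution identity $\sum_{i+j=N}(-1)^i\,e_1^i*(e_1^j)^\star=\delta_{N,0}$ in $\frH^1_*$ (the harmonic analogue of $E(-u)H(u)=1$), which is a short direct check. Alternatively, repeating the Newton-style induction works once you use $(e_1^m)^\star=\sum_{c=1}^m e_c\,(e_1^{m-c})^\star$ to peel off the leading letter. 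Either way the missing step is a routine combinatorial verification, not a new idea.
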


\begin{proof}  
These are more or less well-known. The first (resp.\ second) is essentially an identity 
between elementary (resp.\ complete) and power-sum symmetric functions. See \cite[Theorem 5.1]{H}. 
Also \cite[(4.6)]{IKZ} is equivalent to the first identity.
We may also obtain the second identity from the first by applying the antipode of the Hopf algebra 
of quasi-symmetric functions described in \cite{H2}. 
\end{proof}

For non-empty indices $\bk=(k_1,\ldots,k_r)$ and $\bl=(l_1,\ldots,l_s)$, 
we put 
\begin{gather*}
\bk_i=(k_1,\ldots,k_i),\quad \bk^i=(k_{i+1},\ldots,k_r)\qquad 
(0\leq i\leq r),\\
\rev{\bk}=(k_r,\ldots,k_1),\\
\bk\odot\bl=(k_1,\ldots,k_{r-1},k_r+l_1,l_2,\ldots,l_s). 
\end{gather*}
With our convention, we have $\bk_0=\bk^r=\varnothing$.

\begin{lem}\label{lem:algebraic identities}
For any non-empty indices $\bk=(k_1,\ldots,k_r)$ and $\bl=(l_1,\ldots,l_s)$, 
the following identities hold in $\frH^1$: 
\begin{align}
\label{eq:A_sh}
&\sum_{i=0}^{s-1}(-1)^i\mu(\bk,\bl^i)\bsh \rev{\bl_i}
+(-1)^s\bk\odot\rev{\bl}=0, \tag{$A_\sh$}\\
\label{eq:A_sh^star}
&\sum_{i=0}^{r-1}(-1)^i\mu(\bk^i,\bl)\bsh 
W\left(\ \begin{xy}
{(0,0) \ar@{{*}-} (7,0)*+[F]{\rev{\bk_i}}}
\end{xy}\ \right)
+(-1)^r\,W\left(\ \begin{xy}
{(0,0) \ar@{{*}-} (9,0)*+[F]{\bl\odot\rev{\bk}}}
\end{xy}\ \right)=0, \tag{$A_\sh^\star$}\\
\label{eq:A_*}
&\sum_{i=0}^{s-1}
(-1)^i\bigl(\bk\circledast (\bl^i)^\star\bigr)*\rev{\bl_i}
+(-1)^s\,\bk\odot \rev{\bl}=0, \tag{$A_*$}\\
\label{eq:A_*^star}
&\sum_{i=0}^{r-1}
(-1)^i\bigl(\bk^i\circledast \bl^\star\bigr)*(\rev{\bk_i})^\star
+(-1)^r(\bl\odot\rev{\bk})^\star=0. \tag{$A_*^\star$}
\end{align}
\end{lem}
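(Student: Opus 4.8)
The plan is to split the four identities into two families and prove each by an explicit telescoping.  Identities \eqref{eq:A_sh} and \eqref{eq:A_sh^star} are statements in $\frH_\sh$ built from $\mu$ and $\bsh$; I would prove these by manipulating $2$-posets through the $\Q$-algebra homomorphism $W\colon\frP\to\frH_\sh$.  Identities \eqref{eq:A_*} and \eqref{eq:A_*^star} are statements in $\frH^1_*$ built from $\circledast$, $(-)^\star$ and $*$; these I would prove by induction, the inductive step being an expansion by the recursive defining formulas of $*$, $\circledast$ and $(-)^\star$ that parallels the $2$-poset telescoping of the first family.  The common base case in each family is a ``single-run'' identity — $\mu(\bk,(l_1))=\bk\odot(l_1)$ for \eqref{eq:A_sh}; $\mu((k_1),\bl)=W$ of the MZSV-type $2$-poset attached to $\bl\odot(k_1)$ (in the sense of Example \ref{ex:MZSV}) for \eqref{eq:A_sh^star}; $\bk\circledast(l_1)^\star=\bk\odot(l_1)$ and $(k_1)\circledast\bl^\star=(\bl\odot(k_1))^\star$ for the harmonic pair — and each of these follows immediately by unwinding the definition of $\mu$, respectively of $\circledast$.

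I would carry out \eqref{eq:A_sh} as follows.  For $0\le i\le s-1$ let $D_i$ be the $2$-poset defining $\mu(\bk,\bl^i)$; its ``$\bl^i$-zigzag'' part has the run associated to $l_{i+1}$ as its rightmost run, terminating in an apex vertex $t_i$.  Let $U_i$ be the $2$-poset obtained from $D_i$ by stacking the chain $C_{\rev{\bl_i}}$ (the totally ordered $2$-poset with $W(C_{\rev{\bl_i}})=\rev{\bl_i}$) straight on top of $t_i$.  Then $U_0=D_0$, so $W(U_0)=\mu(\bk,\bl)$; and $U_{s-1}$ is the totally ordered chain for $\bk\odot\rev{\bl}$, because $D_{s-1}=\mu(\bk,(l_s))$ is already that chain for $\bk\odot(l_s)$ by the base case, and stacking $\rev{\bl_{s-1}}$ on top of it appends the runs $l_{s-1},\ldots,l_1$.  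Moreover, for $1\le i\le s-1$ the bottom vertex of $C_{\rev{\bl_i}}$ and the apex $t_i$ are non-comparable in the disjoint union $D_i\sqcup C_{\rev{\bl_i}}$, and adjoining the relation putting that bottom vertex below $t_i$ turns the lowest run of $C_{\rev{\bl_i}}$ into exactly the extra rightmost run of the zigzag of $\mu(\bk,\bl^{i-1})$, leaving the remaining runs stacked above it as in $U_{i-1}$; so the shuffle rule \eqref{eq:shuffle for W} gives
\[
\mu(\bk,\bl^i)\bsh\rev{\bl_i}=W\bigl(D_i\sqcup C_{\rev{\bl_i}}\bigr)=W(U_i)+W(U_{i-1}).
\]
Substituting these three observations into the left side of \eqref{eq:A_sh} makes all the interior terms cancel, leaving $\bigl[(-1)^{s-1}+(-1)^s\bigr]W(U_{s-1})=0$.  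Identity \eqref{eq:A_sh^star} is proved by the same device applied to the chain of $\bk$: the $D_i$ are replaced by the $2$-posets defining $\mu(\bk^i,\bl)$, the chains stacked at each stage become the MZSV-type $2$-posets attached to $\rev{\bk_i}$ (those occurring in \eqref{eq:A_sh^star}), and the surviving boundary term is the $W$-image of the MZSV-type $2$-poset attached to $\bl\odot\rev{\bk}$.

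For \eqref{eq:A_*} I would mirror this telescoping algebraically.  Writing $P_i=\bigl(\bk\circledast(\bl^i)^\star\bigr)*\rev{\bl_i}$ and $\rev{\bl_i}=e_{l_i}\rev{\bl_{i-1}}$, one expands the $*$-product by its recursion, then uses $ve_k\circledast we_l=(v*w)e_{k+l}$ and the star recursion $(\bl^{i-1})^\star=e_{l_i}(\bl^i)^\star+\bigl((l_i)\odot\bl^i\bigr)^\star$ to isolate, inside $P_i$, a decomposition $P_i=Q_i+Q_{i-1}$ with $Q_0=\bk\circledast\bl^\star$ and (via the single-run base case) $Q_{s-1}=\bk\odot\rev{\bl}$; the same cancellation then yields \eqref{eq:A_*}.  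Identity \eqref{eq:A_*^star} follows identically after replacing the peeling of $\bl$ by peeling of $\bk$, $\rev{\bl_i}$ by $(\rev{\bk_i})^\star$, the base case by $(k_1)\circledast\bl^\star=(\bl\odot(k_1))^\star$, and the boundary term by $(\bl\odot\rev{\bk})^\star$.

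The step I expect to be the real obstacle is precisely this last one on the harmonic side — exhibiting the decomposition $P_i=Q_i+Q_{i-1}$ and recognising $Q_{s-1}$.  On the shuffle side the corresponding step is a single application of \eqref{eq:shuffle for W}, requiring no case distinctions, because $W$ lets one realise ``a chain stacked on a prescribed interior vertex'' as a completely explicit $2$-poset.  On the harmonic side there is no such tool: the intermediate quantities $Q_i$ are genuinely more complicated than the naive concatenations $\bigl(\bk\circledast(\bl^i)^\star\bigr)\cdot\rev{\bl_i}$ — one checks on a small example, e.g.\ $\bk=(1)$, $\bl=(1,1,1)$, that the concatenation does \emph{not} give a valid splitting — so one must push the letters of $\rev{\bl_i}$ through the $*$-product of $\bk\circledast(\bl^i)^\star$ and re-collect the interior and merged contributions with care.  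The cleanest route, which I would pursue, is to first set up a ``harmonic $2$-poset'' formalism carrying a splitting rule parallel to \eqref{eq:shuffle for W}, so that \eqref{eq:A_*} and \eqref{eq:A_*^star} become literal transcriptions of the arguments for \eqref{eq:A_sh} and \eqref{eq:A_sh^star}; the degenerate cases in which some $l_i$ or $k_i$ equals $1$ (a run collapsing to a single $\bullet$) are a secondary nuisance that the $\odot$-merged shorter index in the star recursion absorbs uniformly.
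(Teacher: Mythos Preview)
Your telescoping proof of \eqref{eq:A_sh} and \eqref{eq:A_sh^star} via the $2$-posets $U_i$ and the rule \eqref{eq:shuffle for W} is exactly the paper's argument (the paper's $w_i$ are your $W(U_i)$).

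For \eqref{eq:A_*} and \eqref{eq:A_*^star} you have the right telescoping skeleton $P_i=Q_i+Q_{i-1}$, but you are overthinking the decomposition step. There is no need to invent a harmonic $2$-poset formalism or to chase the $*$-recursion through $\circledast$ and the star recursion. The paper's argument is purely combinatorial and fits in a few lines: expand $P_i=\bigl(\bk\circledast(\bl^i)^\star\bigr)*\rev{\bl_i}$ completely as a sum of indices, and split according to where the letter $l_i$ (the first letter of $\rev{\bl_i}$) lands relative to $l_{i+1}$ (the first letter of $\bl^i$) in each term. Write $S_i$ for the partial sum of terms in which $l_i$ sits to the \emph{right} of $l_{i+1}$, and $S'_i$ for the rest (those with $l_i$ to the left of $l_{i+1}$, or with $l_i+l_{i+1}$ merged). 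Then $S'_0=0$ trivially; a moment's thought gives the bijection $S_i=S'_{i+1}$ for $0\le i\le s-2$; and $S_{s-1}=\bk\odot\rev{\bl}$ since in $P_{s-1}$ the factor $\bk\circledast(l_s)^\star=\bk\odot(l_s)$ is a single index and the condition ``$l_{s-1}$ to the right of $l_s$'' forces every letter of $\rev{\bl_{s-1}}$ to the right in order. The alternating sum then collapses. These $S_i$ are precisely your $Q_i$, and the positional description makes the identification $S_i=S'_{i+1}$ immediate without any recursive bookkeeping. The proof of \eqref{eq:A_*^star} is the same with the roles of $\bk$ and $\bl$ reversed and with $(\rev{\bk_i})^\star$ in place of $\rev{\bl_i}$.

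So your small example showing that naive concatenation fails is well taken, but the fix is not new machinery: it is to look at the \emph{position} of the newly shuffled letter rather than at the algebraic form of the intermediate object.
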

\begin{proof}
First we show \eqref{eq:A_sh}. By the relation \eqref{eq:shuffle for W}, 
we have 
\[\mu(\bk,\bl^i)\bsh \rev{\bl_i}
=W\left(\ \begin{xy}
{(6,0)*++[o][F]{\bk}="k"},
{(20,6)*++[F]{\bl^i}="l"},
{(0,-6) \ar @{{*}-} "k"}, 
{"k" \ar @{-{o}} (12,6)}, 
{(12,6) \ar @{-} "l"}, 
\end{xy}\ \bigsqcup \ 
\begin{xy}
{(0,-5) \ar@{{*}-} (6,1)*++[o][F]{\rev{\bl_i}}}
\end{xy}\right)=w_{i-1}+w_i, \]
where $w_{-1}=0$ and 
\[w_i
=W\left(\ \begin{xy}
{(6,-6)*++[o][F]{\bk}="k"},
{(20,0)*++[F]{\bl^i}="l"},
{(0,-12) \ar @{{*}-} "k"}, 
{"k" \ar @{-o} (12,0)}, 
{(12,0) \ar @{-} "l"}, 
{"l" \ar @{-} (26,6)}, 
{(26,6) \ar @{{*}-} (32,12) *++[o][F]{\rev{\bl_i}}} 
\end{xy}\right)\]
for $i=0,\ldots,s-1$ (here, the edge connected to the right of 
\fbox{$\bl^i$} is actually connected to the rightmost vertex in 
the diagram \fbox{$\bl^i$}). 
Hence we obtain \eqref{eq:A_sh} by summing them up with signs and 
using that 
$w_{s-1}=W\left(\ \begin{xy}
{(0,-7) \ar@{{*}-} (8,1)*+++[o][F]{\bk\odot\rev{\bl}}}
\end{xy}\right)=\bk\odot\rev{\bl}$. 
The identity \eqref{eq:A_sh^star} can be shown in a similar way. 

Next we prove \eqref{eq:A_*} (again, \eqref{eq:A_*^star} is similarly shown). 
For $i=0,\ldots,s-1$, expand the products $\circledast$ and $*$ in  
\[\bigl(\bk\circledast (\bl^i)^\star\bigr)*\rev{\bl_i}
=\bigl((k_1,\ldots,k_r)\circledast (l_{i+1},\ldots,l_s)^\star\bigr)
*(l_i,\ldots,l_1). \]
We denote by $S_i$ the partial sum of those indices 
in which $l_i$ appears in the right side of $l_{i+1}$, and 
$S'_i$ the sum of the other terms 
(i.e., those in which $l_i$ appears in the left side of $l_{i+1}$ 
or those contain $l_i+l_{i+1}$). When $i=0$, we understand that 
all terms are contained in $S_0$. 
Then we have 
\[S'_0=0,\quad S_i=S'_{i+1} \quad (i=0,\ldots,s-2),\quad 
S_{s-1}=\bk\odot\rev{\bl}. \]
Hence we obtain \eqref{eq:A_*} by taking the alternating sum. 
\end{proof}

\subsection{Proof of Theorem \ref{thm:equiv}}
Now we are ready to prove Theorem \ref{thm:equiv}. 
Here we only show the equivalence of \eqref{eq:Reg^star} and \eqref{eq:FSI}, 
because the equivalence of \eqref{eq:Reg} and \eqref{eq:FSI} is proved  
in almost the same manner. 

First we prove that \eqref{eq:Reg^star} implies \eqref{eq:FSI}.  Throughout,
$Z$ is a $\Q$-linear map from $\frH^0$ to $\R$ satisfying the double shuffle relation.

\begin{prop}
If $Z$ satisfies the star-regularization theorem \eqref{eq:Reg^star}, 
then $Z$ also satisfies the integral-series identity \eqref{eq:FSI}. 
\end{prop}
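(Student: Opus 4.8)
The plan is to exploit the algebraic identities of Lemma~\ref{lem:algebraic identities} to connect the two sides of \eqref{eq:FSI} to the regularizations $\Zbar_\sh$ and $\Zbar_*$. The starting point is to apply the $\Q$-linear map $Z_\sh$ to the shuffle identity \eqref{eq:A_sh^star} and the map $Z_*$ to the harmonic identity \eqref{eq:A_*^star}. In the first case, using that $Z_\sh$ is a homomorphism on $\frH_\sh^1$, the term $\mu(\bk^i,\bl)\bsh W(\begin{xy}{(0,0) \ar@{{*}-} (7,0)*+[F]{\rev{\bk_i}}}\end{xy})$ becomes $Z(\mu(\bk^i,\bl))\cdot\Zbar_\sh(\rev{\bk_i};T)$ — here $\mu(\bk^i,\bl)\in\frH^0$ so $Z_\sh$ evaluates it to the genuine value $Z(\mu(\bk^i,\bl))$ — while the last term $W(\begin{xy}{(0,0) \ar@{{*}-} (9,0)*+[F]{\bl\odot\rev{\bk}}}\end{xy})$ is exactly $\Zbar_\sh(\bl\odot\rev{\bk};T)$ by definition of the star-shuffle regularization. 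Similarly, applying $Z_*$ to \eqref{eq:A_*^star} and using that $w\mapsto Z_*(w^\star)=\Zbar_*(w)$ turns $*$ into ordinary multiplication on the star side (since $(v*w)^\star$-type compatibility is built into $\Zbar_*$), one gets $\sum_i(-1)^i Z(\bk^i\circledast\bl^\star)\,\Zbar_*(\rev{\bk_i};T)+(-1)^r\Zbar_*(\bl\odot\rev{\bk};T)=0$, where again $\bk^i\circledast\bl^\star\in\frH^0$ so it evaluates to the plain value.

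Next I would apply the operator $\rhobar_Z$ to the harmonic relation just obtained. Since $\rhobar_Z$ is $\R$-linear and, by hypothesis \eqref{eq:Reg^star}, sends $\Zbar_*(\bk;T)$ to $\Zbar_\sh(\bk;T)$ for every index, the relation transforms into
\[
\sum_{i=0}^{r-1}(-1)^i Z\bigl(\bk^i\circledast\bl^\star\bigr)\,\Zbar_\sh(\rev{\bk_i};T)+(-1)^r\Zbar_\sh(\bl\odot\rev{\bk};T)=0.
\]
Comparing this with the shuffle relation coming from \eqref{eq:A_sh^star},
\[
\sum_{i=0}^{r-1}(-1)^i Z\bigl(\mu(\bk^i,\bl)\bigr)\,\Zbar_\sh(\rev{\bk_i};T)+(-1)^r\Zbar_\sh(\bl\odot\rev{\bk};T)=0,
\]
subtraction eliminates the final term and yields $\sum_{i=0}^{r-1}(-1)^i\bigl(Z(\mu(\bk^i,\bl))-Z(\bk^i\circledast\bl^\star)\bigr)\Zbar_\sh(\rev{\bk_i};T)=0$. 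Then I would induct on the depth $r$ of $\bk$: for $r=0$ both $\mu$ and $\circledast$ degenerate appropriately (or one treats $r=1$ as the base case, where the sum has the single term $i=0$ with $\Zbar_\sh(\varnothing)=1$, forcing $Z(\mu(\bk,\bl))=Z(\bk\circledast\bl^\star)$). For the inductive step, the terms with $i\geq 1$ involve $\mu(\bk^i,\bl)$ and $\bk^i\circledast\bl^\star$ with $\bk^i$ of depth $r-i<r$, which agree by the induction hypothesis; hence they cancel, leaving only the $i=0$ term $Z(\mu(\bk,\bl))-Z(\bk\circledast\bl^\star)=0$, which is exactly \eqref{eq:FSI}.

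The step I expect to require the most care is verifying that applying $Z_*$ to \eqref{eq:A_*^star} really produces the claimed relation in $\Zbar_*$ — specifically, that $Z_*((v*w)^\star)=\Zbar_*(v)\,\Zbar_*(w)$ in the relevant sense, or more precisely that $(\bk^i\circledast\bl^\star)*(\rev{\bk_i})^\star$ evaluates under $Z_*$ to $Z(\bk^i\circledast\bl^\star)\cdot\Zbar_*(\rev{\bk_i})$. This hinges on the fact that $\bk^i\circledast\bl^\star\in\frH^0$ together with the definition $\Zbar_*(w)=Z_*(w^\star)$ and the multiplicativity of $Z_*$ on $\frH^1_*$; one must check that the $\star$-operation interacts with $*$ compatibly here, which is where the precise shape of the circled harmonic product $\circledast$ and the identity \eqref{eq:cast} enter. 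A second minor point is confirming the base/degenerate cases of the induction, where the diagrams defining $\mu$ collapse according to the stated conventions; once those are pinned down the rest is bookkeeping.
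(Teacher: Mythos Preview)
Your proposal is correct and follows essentially the same argument as the paper's proof. Your flagged concern about applying $Z_*$ to \eqref{eq:A_*^star} is simpler than you fear: the identity already carries the explicit $\star$ on $(\rev{\bk_i})^\star$ and $(\bl\odot\rev{\bk})^\star$, so only the $*$-multiplicativity of $Z_*$ together with $\bk^i\circledast\bl^\star\in\frH^0$ is needed---no further $\star$/$*$ compatibility enters.
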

\begin{proof}
Let $\bk$ and $\bl$ be non-empty indices. 
We prove the equality $Z(\mu(\bk,\bl))=Z(\bk\circledast\bl^\star)$ by 
induction on the depth $r$ of $\bk$. 
Assume the validity for the depth less than $r$ 
(the case $r=1$ is also included, in which case no assumption is needed). 

By applying $Z_\sh$ and $Z_*$ to \eqref{eq:A_sh^star} and 
\eqref{eq:A_*^star} respectively, we obtain 
\begin{align}
\label{eq:Z(A_sh^star)}
\sum_{i=0}^{r-1}
(-1)^iZ\bigl(\mu(\bk^i,\bl)\bigr)\Zbar_\sh(\rev{\bk_i};T)
&+(-1)^r\,\Zbar_\sh(\bl\odot\rev{\bk};T)=0,\\
\label{eq:Z(A_*^star)}
\sum_{i=0}^{r-1}
(-1)^iZ\bigl(\bk^i\circledast\bl^\star\bigr)\Zbar_*(\rev{\bk_i};T)
&+(-1)^r\,\Zbar_*(\bl\odot\rev{\bk};T)=0. 
\end{align}
We then apply $\rhobar_Z$ to \eqref{eq:Z(A_*^star)} to see 
\begin{equation}\label{eq:rhoZ(A_*^star)}
\sum_{i=0}^{r-1}
(-1)^iZ\bigl(\bk^i\circledast\bl^\star\bigr)
\rhobar_Z\bigl(\Zbar_*(\rev{\bk_i};T)\bigr)
+(-1)^r\,\rhobar_Z\bigl(\Zbar_*(\bl\odot\rev{\bk};T)\bigr)=0 
\end{equation}
(note that $Z\bigl(\bk^i\circledast\bl^\star\bigr)\in\R$ and 
$\rhobar_Z$ is $\R$-linear). 
Now compare \eqref{eq:Z(A_sh^star)} and \eqref{eq:rhoZ(A_*^star)}. 
By the assumption \eqref{eq:Reg^star} and the induction hypothesis, 
all terms except for those of $i=0$ in the sums coincide. 
Thus the $i=0$ terms should also coincide, and this is exactly 
what we have to prove. 
\end{proof}

Next we show the converse, i.e., that \eqref{eq:FSI} implies 
\eqref{eq:Reg^star}. In fact, a weaker assumption is sufficient.  

\begin{prop}
If  the equation 
\begin{equation}\label{eq:SI bk=(1,...,1)}
Z\bigl(\mu\bigl((\underbrace{1,\ldots,1}_m),\bl\bigr)\bigr)=Z\bigl((\underbrace{1,\ldots,1}_m)\circledast\bl^\star\bigr)
\end{equation}
holds for any integer $m\geq 1$ and any non-empty index $\bl$, 
then $Z$ satisfies the star-regularization theorem \eqref{eq:Reg^star}. 
\end{prop}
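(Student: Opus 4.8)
The plan is to reduce \eqref{eq:Reg^star} for a general index to the single family of words $e_1^m$ ($m\ge0$), i.e.\ the indices $(\underbrace{1,\ldots,1}_m)$, for which I verify it by a direct computation; the hypothesis \eqref{eq:SI bk=(1,...,1)} will enter only in the reduction step.

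\emph{Step 1: the identity $\Zbar_\sh(e_1^m;T)=\rhobar_Z\bigl(\Zbar_*(e_1^m;T)\bigr)$ holds for all $m\ge0$, unconditionally.} For the right-hand side I would apply $Z_*$ (extended coefficientwise to formal power series in $u$) to the second identity of Lemma \ref{lem:exp_*}. Using $Z_*(e_1)=T$, $Z_*(e_n)=Z(e_n)$ for $n\ge2$, and $\exp\bigl(\sum_{n\ge2}\tfrac{Z(e_n)}{n}u^n\bigr)=A_Z(-u)$, this gives
\[\sum_{m\ge0}\Zbar_*(e_1^m;T)\,u^m=e^{Tu}A_Z(-u).\]
Applying the $\R$-linear map $\rhobar_Z$ coefficientwise and using $\rhobar_Z(e^{Tu})=A_Z(-u)^{-1}e^{Tu}$ yields $\sum_{m\ge0}\rhobar_Z\bigl(\Zbar_*(e_1^m;T)\bigr)u^m=e^{Tu}$, so $\rhobar_Z\bigl(\Zbar_*(e_1^m;T)\bigr)=T^m/m!$. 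For the left-hand side I would note that, by the conventions of Example \ref{ex:MZSV} (all entries of the index being $1$), the 2-poset defining $\Zbar_\sh(e_1^m;T)$ is simply a totally ordered chain of $m$ vertices each labelled $\bullet$; hence its image under $W$ is $e_1^m$, and $Z_\sh(e_1^m)=\tfrac1{m!}Z_\sh(\underbrace{e_1\bsh\cdots\bsh e_1}_m)=\tfrac1{m!}Z_\sh(e_1)^m=T^m/m!$. Thus both sides equal $T^m/m!$.

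\emph{Step 2: reduction of the general case.} Every index is $\varnothing$, or equals $e_1^m$ for some $m\ge1$, or is of the form $\bl\odot e_1^r$ for a non-empty index $\bl$ and some $r\ge1$: indeed, if $(k_1,\ldots,k_n)$ is non-empty with some $k_i\ge2$ and $j$ is the largest such $i$, then $(k_1,\ldots,k_n)=(k_1,\ldots,k_{j-1},k_j-1)\odot e_1^{\,n-j+1}$. Indices of the first two kinds are covered by Step 1 (note $\varnothing=e_1^0$), so it remains to treat $\bl\odot e_1^r$. Here I would take the identities \eqref{eq:A_sh^star} and \eqref{eq:A_*^star} of Lemma \ref{lem:algebraic identities} with $\bk$ replaced by $e_1^r$ and $\bl$ unchanged; then $\bk_i=\rev{\bk_i}=e_1^i$, $\bk^i=e_1^{r-i}$ and $\rev{\bk}=e_1^r$, so applying $Z_\sh$ to \eqref{eq:A_sh^star} and $Z_*$ to \eqref{eq:A_*^star}, exactly as in the derivation of \eqref{eq:Z(A_sh^star)} and \eqref{eq:Z(A_*^star)}, gives
\[\sum_{i=0}^{r-1}(-1)^iZ\bigl(\mu(e_1^{r-i},\bl)\bigr)\Zbar_\sh(e_1^i;T)+(-1)^r\Zbar_\sh(\bl\odot e_1^r;T)=0\]
together with the analogue obtained by replacing $\sh$ by $*$ and each $\mu(e_1^{r-i},\bl)$ by $e_1^{r-i}\circledast\bl^\star$. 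Now apply $\rhobar_Z$ (which is $\R$-linear, and $Z(e_1^{r-i}\circledast\bl^\star)\in\R$) to the $*$-identity. By the hypothesis \eqref{eq:SI bk=(1,...,1)} we have $Z(\mu(e_1^{r-i},\bl))=Z(e_1^{r-i}\circledast\bl^\star)$ for all $i$, and by Step 1 we have $\Zbar_\sh(e_1^i;T)=\rhobar_Z\bigl(\Zbar_*(e_1^i;T)\bigr)$ for $0\le i\le r-1$; hence every summand with $i\in\{0,\ldots,r-1\}$ in the two identities agrees, and therefore so do the remaining terms, i.e.\ $\Zbar_\sh(\bl\odot e_1^r;T)=\rhobar_Z\bigl(\Zbar_*(\bl\odot e_1^r;T)\bigr)$, which is \eqref{eq:Reg^star} for $\bl\odot e_1^r$.

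I expect the only mildly delicate points to be the two computations of Step 1 — pinning down the 2-poset (hence $W$) attached to $e_1^m$, and the formal power series manipulation with $A_Z(-u)$ and $\rhobar_Z$ — and checking that the classification of indices at the start of Step 2 is exhaustive; everything else is bookkeeping. Note that no induction is actually required: Step 1 supplies \eqref{eq:Reg^star} for all the indices $e_1^i$ that enter Step 2 without any hypothesis. The argument mirrors the proof that \eqref{eq:Reg^star} implies \eqref{eq:FSI}, with the term $\bl\odot\rev{\bk}$ isolated in place of the $i=0$ term.
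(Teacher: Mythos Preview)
Your proposal is correct and follows essentially the same approach as the paper: first establish \eqref{eq:Reg^star} for $e_1^m$ unconditionally via Lemma \ref{lem:exp_*} and the shuffle computation $Z_\sh(e_1^m)=T^m/m!$, then specialize \eqref{eq:A_sh^star} and \eqref{eq:A_*^star} to $\bk=e_1^r$, apply $Z_\sh$ and $Z_*$ respectively, hit the $*$-identity with $\rhobar_Z$, and use the hypothesis together with Step~1 to isolate the $\bl\odot e_1^r$ term. The only cosmetic difference is that the paper places the classification of indices at the end rather than the start of the reduction step.
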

\begin{proof}
First we note that the identities
\begin{equation}\label{eq:Reg^star y^n}
\Zbar_\sh\bigl((\underbrace{1,\ldots,1}_n);T\bigr)
=\frac{T^n}{n!}
=\rhobar_Z\bigl(\Zbar_*\bigl((\underbrace{1,\ldots,1}_n);T\bigr)\bigr)
\end{equation}
hold for any $n\geq 0$, assuming only the double shuffle relation. 
Indeed, the first equality is immediate from the identity 
\[W\left(\ \begin{xy}
{(0,4) \ar @{{*}-} (3,1)}, 
{(3,1) \ar @{{*}.{*}} (8,-4)},
{(1,4) \ar @/^1mm/ @{-}^{n} (8,-3)}
\end{xy}\ \right)
=\frac{\overbrace{e_1\bsh\cdots\bsh e_1}^n}{n!}\]
in $\frH_\sh$.  
On the other hand, by applying $Z_*$ to the second identity of 
Lemma \ref{lem:exp_*}, we obtain 
\[\sum_{n=0}^\infty\Zbar_*\bigl((\underbrace{1,\ldots,1}_n);T\bigr)u^n
=\exp\Biggl(\sum_{n=1}^\infty\frac{Z_*(e_n)}{n}u^n\Biggr)
=e^{Tu}A_Z(-u), \]
hence 
\[\sum_{n=0}^\infty
\rhobar_Z\bigl(\Zbar_*\bigl((\underbrace{1,\ldots,1}_n);T\bigr)\bigr)u^n
=\rhobar_Z(e^{Tu})A_Z(-u)=e^{Tu}. \]
This gives the second equality in \eqref{eq:Reg^star y^n}. 

Next, put $\bk=(\underbrace{1,\ldots,1}_r)$ 
in \eqref{eq:Z(A_sh^star)} and \eqref{eq:Z(A_*^star)} to get 
\begin{align*}
\Zbar_\sh\bigl(\bl\odot(\underbrace{1,\ldots,1}_r);T)
&=\sum_{i=0}^{r-1}(-1)^{r-1-i}
Z\bigl(\mu((\underbrace{1,\ldots,1}_{r-i}),\bl)\bigr)
\Zbar_\sh\bigl((\underbrace{1,\ldots,1}_i);T\bigr),\\
\Zbar_*\bigl(\bl\odot(\underbrace{1,\ldots,1}_r);T)
&=\sum_{i=0}^{r-1}(-1)^{r-1-i}
Z\bigl((\underbrace{1,\ldots,1}_{r-i})\circledast\bl^\star\bigr)
\Zbar_*\bigl((\underbrace{1,\ldots,1}_i);T\bigr)  
\end{align*}
for any non-empty index $\bl$. 
If we apply $\rhobar_Z$ to the latter equality, 
the right-hand side coincides with that of the formar equality, 
by the assumption \eqref{eq:SI bk=(1,...,1)} and the identity 
\eqref{eq:Reg^star y^n}. Hence we have 
\[\Zbar_\sh\bigl(\bl\odot(\underbrace{1,\ldots,1}_r);T)
=\rhobar_Z\bigl(\Zbar_*\bigl(\bl\odot(\underbrace{1,\ldots,1}_r);T)\bigr)\]
for any non-empty index $\bl$ and any integer $r\geq 1$. 
Now the proof is complete, since an arbitrary index is either
of the form $(\underbrace{1,\ldots,1}_n)$ with some $n\geq 0$, 
or of the form $\bl\odot(\underbrace{1,\ldots,1}_r)$ 
with some non-empty $\bl$ and $r\geq 1$. 
\end{proof}

Thus we have finished the proof of Theorem \ref{thm:equiv}. 

\begin{rem}  Our proof of Theorem \ref{thm:equiv} gives an almost purely algebraic
way to prove the fundamental theorem of regularization (Theorem \ref{thm:IKZ}).
The only point where we need the analysis (or rather the property of real numbers)
is in the elementary computation of multiple integrals to prove the integral-series identity \eqref{eq:SI}!
\end{rem}

\subsection{Proof of Theorem \ref{thm:shuffle=harmonic}}
Let us assume that $Z$ satisfies \eqref{eq:FSI} and the shuffle relation, 
and prove the harmonic relation (the other direction is similarly proved). 
We prove that the identity $Z(\bk)Z(\bl)=Z(\bk*\bl)$ holds for any 
admissible indices $\bk$ and $\bl$ by induction on the depth of $\bl$. 

Take any admissible indices $\bk=(k_1,\ldots,k_r)$ and 
$\bl=(l_1,\ldots,l_s)$, with $r,s>0$. We put 
$\tilde{\bk}=(k_1,\ldots,k_{r-1},k_r-1)$ and $\hat{\bl}=(l_s,\ldots,l_1,1)$, 
use \eqref{eq:A_sh} and \eqref{eq:A_*} to the pair $(\tilde{\bk},\hat{\bl})$, 
and apply the map $Z$. Then we obtain that  
\[\sum_{i=0}^s(-1)^iZ\bigl(\mu(\tilde{\bk},\hat{\bl}^i)\bsh \rev{\hat{\bl}_i}\bigr)
=(-1)^sZ\bigl(\tilde{\bk}\odot\rev{\hat{\bl}}\bigr)=
\sum_{i=0}^s(-1)^iZ\bigl((\tilde{\bk}\circledast(\hat{\bl}^i)^\star)*\rev{\hat{\bl}_i}\bigr). \]
Since $\rev{\hat{\bl}_i}=(l_{s-i+1},\ldots,l_s)$ is admissible as well as 
$\mu(\tilde{\bk},\hat{\bl}^i)$ and $\tilde{\bk}\circledast(\hat{\bl}^i)^\star$, 
we can use the assumption of the shuffle relation on the left-hand side 
and the induction hypothesis of the harmonic relation on the right-hand side: 
\[\sum_{i=0}^s(-1)^iZ\bigl(\mu(\tilde{\bk},\hat{\bl}^i)\bigr)\,Z\bigl(\rev{\hat{\bl}_i}\bigr)
=\sum_{i=0}^{s-1}(-1)^iZ\bigl(\tilde{\bk}\circledast(\hat{\bl}^i)^\star\bigr)
\,Z\bigl(\rev{\hat{\bl}_i}\bigr)
+(-1)^sZ\bigl((\tilde{\bk}\circledast(\hat{\bl}^s)^\star)*\rev{\hat{\bl}_s}\bigr). \]
The integral-series identity \eqref{eq:FSI} implies that the corresponding terms 
for $i=0,\ldots,s-1$ are equal, hence we also have the equality for $i=s$: 
\[Z\bigl(\mu(\tilde{\bk},\hat{\bl}^s)\bigr)\,Z\bigl(\rev{\hat{\bl}_s}\bigr)
=Z\bigl((\tilde{\bk}\circledast(\hat{\bl}^s)^\star)*\rev{\hat{\bl}_s}\bigr). \]
This is exactly the identity $Z(\bk)Z(\bl)=Z(\bk*\bl)$, since 
$\mu(\tilde{\bk},\hat{\bl}^s)=\tilde{\bk}\circledast(\hat{\bl}^s)^\star=\bk$ 
and $\rev{\hat{\bl}_s}=\bl$ by definition. 

\section{Relationship with Kawashima's relation}\label{sec:kawashima}
In \cite{K}, Kawashima obtained a remarkable class of 
algebraic relations among MZVs. 
In this section, we show that the double shuffle relation, 
the regularization theorem, and the duality relation together 
imply Kawashima's relation. 

\subsection{The duality relation}
First we formulate the duality relation in the formal setting. 

\begin{defn}
Let us denote by $w\mapsto w^\dual$ the anti-automorphsim 
of $\frH$ determined by $e_0^\dual=e_1$ and $e_1^\dual=e_0$. 
Note that the map $w\mapsto w^\dual$ preserves $\frH^0$. 
We say that a $\Q$-linear map $Z\colon\frH^0\to\R$ 
satisfies the \emph{duality relation} 
if the equality $Z(w)=Z(w^\dual)$ holds for any $w\in\frH^0$. 
\end{defn}

\begin{ex}
The MZV-evaluation map $\zeta\colon\frH^0\to\R$ satisfies the 
duality relation. This is an immediate consequence of the iterated 
integral expression \eqref{eq:SI MZV} and is well known. 
\end{ex}

We need another notion of dual of an index, 
called the Hoffman dual (see \cite{H2}). 

\begin{defn}
For an index $\bk=(k_1,\ldots,k_r)$, its \emph{Hoffman dual} is 
the index $\bk^\vee=(k'_1,\ldots,k'_{r'})$ determined by 
$k:=k_1+\cdots+k_r=k'_1+\cdots+k'_{r'}$ and 
\begin{equation*}
\{1,2,\ldots,k-1\}
=\{k_1,k_1+k_2,\ldots,k_1+\cdots+k_{r-1}\}
\amalg\{k'_1,k'_1+k'_2,\ldots,k'_1+\cdots+k'_{r'-1}\}. 
\end{equation*}
We extend the map $\bk\mapsto\bk^\vee$ to 
a $\Q$-linear automorphism of $\frH^1$. 
\end{defn}

Thus we have two notions of dual $\bk^\dual$ and $\bk^\vee$ 
of an index. They are related to the notion of the transpose of a 2-poset
in the following ways. 

\begin{defn}
For a 2-poset $X$, let $X^\trp$ denote its \emph{transpose}, 
i.e., the 2-poset obtained by reversing the order on $X$ 
and setting $\delta_{X^\trp}(x)=1-\delta_X(x)$. 
We extend the map $X\mapsto X^\trp$ to 
a $\Q$-linear automorphism on $\frP$. 
\end{defn}

The following equalities are easily verified from the definition: 
\begin{alignat*}{2}
W(X)^\dual&=W(X^\trp) &&\text{\ for any $X\in\frP$}, \\
\begin{xy}
{(0,0) \ar @{{*}-} (7,0)*+[F]{\bk^\vee}}
\end{xy}&=
\left(\ \begin{xy}
{(0,0) \ar @{o-} (6,0)*+[F]{\bk}}
\end{xy}\ \right)^\trp
&&\text{\ for any non-empty index $\bk$}. 
\end{alignat*}
From the first equality, we see that 
$Z$ satisfies the duality relation if and only if 
\[Z(W(X))=Z(W(X^\trp))\]
holds for any $X\in\frP^0$. 

\subsection{Kawashima's relation}
Let us recall Kawashima's result in our notation. 

\begin{thm}[{\cite[Theorem 5.3]{K}}]
For any non-empty indices $\bk$, $\bl$ and any integer $m\geq 1$, we have 
\[\begin{split}
\sum_{\substack{p,q\geq 1\\ p+q=m}}
\zeta\bigl((\underbrace{1,\ldots,1}_p)
\circledast (\bk^\vee)^\star\bigr)
&\zeta\bigl((\underbrace{1,\ldots,1}_q)
\circledast (\bl^\vee)^\star\bigr)\\
&=-\zeta\bigl((\underbrace{1,\ldots,1}_m)
\circledast ((\bk\bast\bl)^\vee)^\star\bigr). 
\end{split}\]
\end{thm}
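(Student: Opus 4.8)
The plan is to derive Kawashima's relation by expressing both sides in terms of the ``$\mu$'' integrals and the star-shuffle regularization, and then invoking the duality relation on the 2-poset level together with the regularization theorem. Concretely, recall that $\bast$ (the Kawashima/``bar-star'' product) is the product dual to the harmonic product under the Hoffman dual, i.e.\ $(\bk\bast\bl)^\vee$ is built from $\bk^\vee$ and $\bl^\vee$ by the rule that makes $w\mapsto w^\vee$ an algebra map for the relevant products. The right-hand side $\zeta\bigl((\underbrace{1,\ldots,1}_m)\circledast((\bk\bast\bl)^\vee)^\star\bigr)$ should, by \eqref{eq:SI} (Theorem~\ref{prop:SI}) applied with the first index $(\underbrace{1,\dots,1}_m)$, equal $\zeta\bigl(\mu((\underbrace{1,\ldots,1}_m),(\bk\bast\bl)^\vee)\bigr)$, which in turn equals $\Zbar_\sh$-type data: more precisely, using the special value $\zeta\bigl((1)\circledast\bl^\star\bigr)=\zeta^\star(l_1,\dots,l_{s-1},l_s+1)$ noted after \eqref{eq:cast}, together with the generating-function identities of Lemma~\ref{lem:exp_*}, one packages the sum over $m$ (or over $p+q=m$) into an identity of formal power series in one variable.

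The key steps, in order, would be: (i) rewrite each factor $\zeta\bigl((\underbrace{1,\ldots,1}_p)\circledast(\bk^\vee)^\star\bigr)$ via Theorem~\ref{prop:SI} as $\zeta\bigl(\mu((\underbrace{1,\dots,1}_p),\bk^\vee)\bigr)$, and recognize this, through the computation in the proof of the second Proposition of \S\ref{sec:proof} (the formulas for $\Zbar_\sh(\bl\odot(\underbrace{1,\dots,1}_r);T)$), as a coefficient in the power series $\sum_p \zeta\bigl(\mu((\underbrace{1,\dots,1}_p),\bk^\vee)\bigr)u^p$; (ii) use the 2-poset transpose identities $W(X)^\dual=W(X^\trp)$ and $\begin{xy}{(0,0) \ar @{{*}-} (6,0)*+[F]{\bk^\vee}}\end{xy}=\left(\begin{xy}{(0,0) \ar @{o-} (6,0)*+[F]{\bk}}\end{xy}\right)^\trp$ to convert the ``$\circledast(\bk^\vee)^\star$'' data into the duality-image of the ``$\circ$-rooted'' diagram for $\bk$, so that the duality relation $Z(w)=Z(w^\dual)$ turns products over $\bk^\vee,\bl^\vee$ into products over $\bk,\bl$ that are visibly governed by the harmonic/shuffle structure; (iii) apply the regularization theorem in the star form ($\text{Reg}^\star$, i.e.\ the Corollary for $\zeta^\star$) to match the shuffle-regularized side against the harmonic-regularized side, where the harmonic side factors as a product because $\zeta$ is a $*$-homomorphism; (iv) extract the coefficient of $u^m$ and read off precisely Kawashima's identity, the minus sign emerging from $A_Z(-u)^{-1}$ in the definition of $\rhobar_Z$ exactly as the sign $(-1)^s$ appeared in Lemma~\ref{lem:algebraic identities}.

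The main obstacle I anticipate is step (ii)–(iii): correctly identifying how the Kawashima product $\bast$ and the Hoffman dual $\vee$ interact with the transpose $\trp$ and the circled product $\circledast$, so that the generating-function manipulation genuinely closes up. In essence one must show that, under duality, the ``integral side'' of \eqref{eq:A_sh^star}/\eqref{eq:A_sh} for the pair $(\bk,\bl)$ transports to a statement about $\bk^\vee,\bl^\vee$ whose harmonic counterpart is \eqref{eq:A_*}; this is where the combinatorics of which edges ``go down'' in the diagrams for $1$'s (the convention flagged in Example~\ref{ex:MZSV} and after \eqref{eq:mu}) has to be tracked carefully. Once that dictionary is pinned down, the remaining work is a bookkeeping exercise with the two power series $\sum_n(e_1^n)^\star u^n=\exp_*(\sum e_n u^n/n)$ and $\rhobar_Z(e^{Tu})=A_Z(-u)^{-1}e^{Tu}$, and the duality plus regularization inputs do the rest. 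Assuming the duality relation for $Z=\zeta$ (which holds, as noted in the Example), this yields Kawashima's relation.
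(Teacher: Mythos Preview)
Your proposal correctly identifies the three inputs the paper uses---the integral-series identity, duality, and the regularization theorem---and your step~(i) and the first half of step~(ii) match the paper exactly: one rewrites $Z\bigl((1,\ldots,1)_m\circledast(\bk^\vee)^\star\bigr)$ via \eqref{eq:FSI} and then applies the transpose identity to land on the quantity the paper calls $A_m(\bk)=Z\circ W$ of the ``$\circ$-capped'' diagram for $\bk$. From there, however, your plan diverges from the paper and the part you flag as the obstacle is indeed the substantive gap.

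The paper does \emph{not} proceed by a generating-function argument in a single variable $u$, nor does it invoke \eqref{eq:Reg^star} or derive the minus sign from $A_Z(-u)^{-1}$. Instead it introduces an auxiliary family $B_m(\bk)=Z_*\circ W$ of the same diagram (note: the \emph{harmonic} regularization, not $Z$ itself), and proves two lemmas: (a) $B_m$ is multiplicative for the harmonic product, $\sum_{p+q=m}B_p(\bk)B_q(\bl)=B_m(\bk*\bl)$, via an explicit bijection on words; and (b) an alternating-sum relation $\sum_{i=0}^{r-1}(-1)^iA_m(\bk^i)B_0(\rev{\bk_i})+(-1)^rB_m(\rev{\bk})=0$, obtained from a shuffle identity followed by $\rho_Z^{-1}$ (i.e.\ \eqref{eq:Reg}, not \eqref{eq:Reg^star}). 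Setting $B_m'=B_m\circ R$ with $R(\bk)=(-1)^r\rev{\bk}$ and using $R(\bk\bast\bl)=R(\bk)*R(\bl)$, the paper then applies (b) to $\bk\bast\bl$, expands with (a), and cancels terms to reduce to an identity that yields \eqref{eq:A_m relation} by induction on the depths of $\bk$ and $\bl$. The minus sign in Kawashima's relation arises from the sign $(-1)^r$ in $R$ and the alternating sums in (b), not from $\rhobar_Z$.

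So your outline is pointed in the right direction but is missing the key structural idea: the auxiliary $B_m$ (living in the $Z_*$-world) and its harmonic multiplicativity are what make the $\bast$/$\vee$ bookkeeping close up. A pure generating-function manipulation with $\rhobar_Z$ will not produce this without, in effect, rediscovering lemmas~(a) and~(b); and your proposed source of the sign is not correct as stated.
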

Here, the multiplication $\bast$ on $\frH^1$ is the ``zeta-star version'' of 
the harmonic product and is defined by
\begin{gather*}
1\bast w=w\bast 1=w\quad (w\in\frH^1), \\
e_kv\bast e_lw=e_k(v\bast e_lw)+e_l(e_kv\bast w)-e_{k+l}(v\bast w) 
\quad (k,l\geq 1, v,w\in\frH^1). 
\end{gather*}

Motivated by the above result, 
we give the following definition: 

\begin{defn}
A $\Q$-linear map $Z\colon\frH^0\to\R$ is said to satisfy 
\emph{Kawashima's relation} if 
\[\begin{split}
\sum_{\substack{p,q\geq 1\\ p+q=m}}
Z\bigl((\underbrace{1,\ldots,1}_p)
\circledast (\bk^\vee)^\star\bigr)
&Z\bigl((\underbrace{1,\ldots,1}_q)
\circledast (\bl^\vee)^\star\bigr)\\
&=-Z\bigl((\underbrace{1,\ldots,1}_m)
\circledast ((\bk\bast\bl)^\vee)^\star\bigr) 
\end{split}\]
holds for any non-empty indices $\bk,\bl$ and any integer $m\geq 1$. 
\end{defn}

\begin{thm}
If a $\Q$-linear map $Z\colon\frH^0\to\R$ satisfies 
the double shuffle relation, the regularization theorem 
(or equivalently the integral-series identity) and the duality relation, 
then $Z$ also satisfies Kawashima's relation. 
\end{thm}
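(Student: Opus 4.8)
The strategy is to reduce Kawashima's relation for $Z$ to a statement that can be verified using only the tools already assembled in the paper: the integral-series identity \eqref{eq:FSI} (equivalently the regularization theorem), the double shuffle relation, and the duality relation. The guiding principle is that each of the three zeta-like quantities occurring in Kawashima's relation is of the special shape $Z\bigl((\underbrace{1,\ldots,1}_p)\circledast \bm^\star\bigr)$, and by \eqref{eq:FSI} such a quantity equals $Z\bigl(\mu((\underbrace{1,\ldots,1}_p),\bm)\bigr)$, i.e.\ the $Z$-value of an explicit element of $\frH^0$ attached to a 2-poset. So the first step is to rewrite the entire identity, via \eqref{eq:FSI}, purely in terms of $Z\circ W$ applied to 2-posets (or sums thereof), where the relevant 2-posets are the $\mu$-diagrams with a string of $\bullet$'s (for the $(1,\ldots,1)$ part) attached to a box labeled by a Hoffman dual.

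Next I would bring in duality. Using the identity $W(X)^\dual = W(X^\trp)$ and the compatibility $\begin{xy}{(0,0) \ar @{{*}-} (5,0)*+[F]{\bk^\vee}}\end{xy} = \Bigl(\begin{xy}{(0,0) \ar @{o-} (5,0)*+[F]{\bk}}\end{xy}\Bigr)^\trp$ recorded just before this subsection, the appearance of the Hoffman duals $\bk^\vee$, $\bl^\vee$, $(\bk\bast\bl)^\vee$ inside the $\circledast$-products should transform, after transposing the underlying 2-posets, into diagrams built directly from $\bk$, $\bl$ and $\bk\bast\bl$ with boxes attached by $\circ$-edges rather than $\bullet$-edges, and with the long $\bullet$-string turning into a long $\circ$-string. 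The point of this maneuver is that on the transposed side the combinatorics should match a shuffle-type decomposition: a product $Z(W(X_1))Z(W(X_2))$ with $X_1$, $X_2$ totally ordered-ish pieces becomes, by the shuffle relation for $Z$ (which holds since $Z$ satisfies the double shuffle relation), $Z(W(X_1 \bsh X_2))$, and one wants $W(X_1)\bsh W(X_2)$, summed over $p+q=m$, to reassemble into $-W$ of the single 2-poset on the right-hand side. Concretely this amounts to recognizing that interleaving two strings of $\circ$'s of total length $m$ next to the transposed $\bk$- and $\bl$-boxes reproduces exactly the 2-poset whose $W$-image corresponds to $(\bk\bast\bl)^\vee$, with the $-$ sign coming from the sign in the recursion defining $\bast$ (equivalently, from the relation between $\bast$ and the Hoffman dual of the ordinary harmonic product, since $(\bk\bast\bl)^\vee$ is governed by a stuffle-type rule that the $\circ$-string interleaving realizes up to sign).

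The cleanest route to making the last step precise is probably to prove an auxiliary algebraic identity in $\frH^1$ (or in $\frP$) of the form
\[
\sum_{\substack{p,q\geq 1\\ p+q=m}}
W(D_p(\bk))\bsh W(D_q(\bl)) = -\,W(D_m(\bk\bast\bl)),
\]
where $D_p(\bm)$ denotes the transpose of the 2-poset $\begin{xy}{(0,0) \ar @{{*}-} (5,0)*+[F]{(\underbrace{1,\ldots,1}_p)\circledast\bm^\star}}\end{xy}$ (or the corresponding element), and then apply $Z$ and use the shuffle relation on the left and duality to return to the original (non-transposed) formulation. Establishing this $W$-level identity is where the real work lies: one has to track carefully how the $\star$-operation, the Hoffman dual, and the transpose interact, and verify that the shuffle of the two $\circ$-strings of lengths $p$ and $q$ summed over $p+q=m$ produces precisely the $\bast$-combination with the right sign. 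I expect \textbf{this combinatorial matching --- pinning down the sign and the exact correspondence between $\circ$-string shuffles and the $\bast$-recursion at the level of 2-posets --- to be the main obstacle}; once it is in place, the assembly (apply \eqref{eq:FSI} to pass from $\circledast$-sums to $\mu$-diagrams, transpose via duality, shuffle via the double shuffle relation, then undo) is routine. An alternative to the 2-poset bookkeeping is to argue entirely in $\frH^1$ using Lemma \ref{lem:exp_*}: encode the strings $(\underbrace{1,\ldots,1}_p)$ by the generating series $\sum e_1^p u^p = \exp_*(\cdots)$, apply $Z_\sh$ and $Z_*$ together with the regularization theorem to convert between the two products, and read off the $u^m$-coefficient; this trades the diagrammatic matching for a manipulation of exponential generating functions in $\frH^1_\sh$ and $\frH^1_*$, but the crux --- getting the $\bast$ recursion and its sign out of the harmonic/stuffle structure --- is the same.
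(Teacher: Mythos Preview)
Your opening reduction is right and matches the paper: using \eqref{eq:FSI} and then the duality relation (via $W(X)^\dual=W(X^\trp)$ and the Hoffman-dual/transpose compatibility) one rewrites each term $Z\bigl((\underbrace{1,\ldots,1}_m)\circledast(\bk^\vee)^\star\bigr)$ as $A_m(\bk):=Z\circ W$ of the 2-poset consisting of a descending chain of $m$ $\circ$'s, a $\bullet$, and the rectangular box $\bk$. Kawashima's relation then becomes \eqref{eq:A_m relation}: $\sum_{p+q=m}A_p(\bk)A_q(\bl)=-A_m(\bk\bast\bl)$.

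The gap is in your main line for proving \eqref{eq:A_m relation}. The proposed identity
\[
\sum_{p+q=m}W\bigl(D_p(\bk)\bigr)\bsh W\bigl(D_q(\bl)\bigr)=-\,W\bigl(D_m(\bk\bast\bl)\bigr)
\]
in $\frH^1$ is \emph{false}, already in the smallest case $\bk=\bl=(1)$, $m=2$: the left side is $e_2\bsh e_2=4e_1e_1e_0e_0+2e_1e_0e_1e_0$, while the right side is $-2\,W(D_2((1,1)))+W(D_2((2)))=-2e_1e_1e_0e_0+3e_1e_0e_0e_0$. So there is no purely shuffle-level identity waiting to be matched combinatorially; the appearance of $\bast$ cannot be produced by shuffling $\circ$-strings alone.

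What the paper actually does is bring the harmonic side in explicitly. It introduces auxiliary quantities $B_m(\bk):=Z_*\circ W$ of the same 2-poset but evaluated with the \emph{harmonic} regularization, and proves two lemmas: (i) a word-level identity yielding $\sum_{p+q=m}B_p(\bk)B_q(\bl)=B_m(\bk*\bl)$ (this one \emph{is} algebraic in $\frH^1_*$), and (ii) an alternating-sum relation $\sum_i(-1)^iA_m(\bk^i)B_0(\rev{\bk_i})+(-1)^rB_m(\rev{\bk})=0$, whose proof uses a shuffle identity together with the regularization theorem $Z_*=\rho_Z^{-1}\circ Z_\sh$. The bridge to $\bast$ is the sign-twisted reversal $R(\bk)=(-1)^{\mathrm{dep}(\bk)}\rev{\bk}$, which satisfies $R(\bk\bast\bl)=R(\bk)*R(\bl)$; combining (i), (ii) and this with an induction on depths gives \eqref{eq:A_m relation}. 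Your ``alternative'' paragraph is pointing in the right direction (one really must pass through $Z_*$ and the regularization theorem), but the concrete mechanism---the $B_m$'s, the alternating-sum lemma, and the $R$-twist converting $*$ into $\bast$---is the missing idea.
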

\begin{proof}
By using \eqref{eq:FSI} and the duality relation, we have 
\begin{align*}
Z\bigl((\underbrace{1,\ldots,1}_m)
\circledast (\bk^\vee)^\star\bigr)
=Z\circ W\left( \begin{xy}
{(0,-5) \ar @{{*}.} (4,-1)}, 
{(4,-1) \ar @{{*}-} (8,3)}, 
{(8,3) \ar @{o-} (15,3)*+[F]{\bk^\vee}}, 
{(0,-4) \ar @/^1mm/ @{-}^{m} (3,-1)} 
\end{xy}\ \right)
=Z\circ W\left( \begin{xy}
{(0,5) \ar @{{o}.} (4,1)}, 
{(4,1) \ar @{{o}-} (8,-3)}, 
{(8,-3) \ar @{{*}-} (14,-3)*+[F]{\bk}}, 
{(0,4) \ar @/_1mm/ @{-}_{m} (3,1)} 
\end{xy}\ \right) 
\end{align*}
for non-empty $\bk$ and $m\geq 1$. 
Therefore, if we denote the rightmost expression by $A_m(\bk)$, 
it suffices to prove 
\begin{equation}\label{eq:A_m relation}
\sum_{\substack{p,q\geq 1\\ p+q=m}}
A_p(\bk)A_q(\bl)=-A_m(\bk\bast\bl) 
\end{equation}
(here $A_m$ is extended to a linear functional on $e_1\frH^1$; 
the same rule is also applied to $B_m$ below). 
To prove this, we need two lemmas. 

\begin{lem}\label{lem:B_m relation}
For an index $\bk$ (admissible or not) and $m\geq 0$, put 
\[B_m(\bk)=Z_*\circ W\left( \begin{xy}
{(0,4) \ar @{{o}.} (4,0)}, 
{(4,0) \ar @{{o}-} (8,-4)}, 
{(8,-4) \ar @{{*}-} (13,1)*++[o][F]{\bk}}, 
{(0,3) \ar @/_1mm/ @{-}_{m} (3,0)}, 
\end{xy}\ \right) \]
if $\bk\ne\varnothing$, and put 
\[B_m(\varnothing)=\begin{cases}1&(m=0), \\ 0&(m>0).\end{cases}\]
Then we have 
\begin{equation}\label{eq:B_m relation}
\sum_{\substack{p,q\geq 0\\ p+q=m}}
B_p(\bk)B_q(\bl)=B_m(\bk*\bl) 
\end{equation}
for any indices $\bk,\bl$ and $m\geq 0$. 
\end{lem}
\begin{proof}
If $\bk$ or $\bl$ is empty, the claim is obvious. 
If both $\bk=(k_1,\ldots,k_r)$ and $\bl=(l_1,\ldots,l_s)$ are non-empty, 
it is obtained by applying $Z_*$ to the identity 
\begin{equation}\label{eq:B_m relation word}
\sum_{\substack{p,q\geq 0\\ p+q=m}}
W\left( \begin{xy}
{(0,4) \ar @{{o}.} (4,0)}, 
{(4,0) \ar @{{o}-} (8,-4)}, 
{(8,-4) \ar @{{*}-} (13,1)*++[o][F]{\bk}}, 
{(0,3) \ar @/_1mm/ @{-}_{p} (3,0)}, 
\end{xy}\ \right)
*W\left( \begin{xy}
{(0,4) \ar @{{o}.} (4,0)}, 
{(4,0) \ar @{{o}-} (8,-4)}, 
{(8,-4) \ar @{{*}-} (13,1)*++[o][F]{\bl}}, 
{(0,3) \ar @/_1mm/ @{-}_{q} (3,0)}, 
\end{xy}\ \right)
=W\left( \begin{xy}
{(0,4) \ar @{{o}.} (4,0)}, 
{(4,0) \ar @{{o}-} (8,-4)}, 
{(8,-4) \ar @{{*}-} (13,1)*++[o][F]{\bk*\bl}}, 
{(0,3) \ar @/_1mm/ @{-}_{m} (3,0)}, 
\end{xy}\ \right)
\end{equation}
in $\frH^1$. To see \eqref{eq:B_m relation word}, we expand $W$'s and 
the harmonic products. 
First consider the factor $W\left( \begin{xy}
{(0,4) \ar @{{o}.} (4,0)}, 
{(4,0) \ar @{{o}-} (8,-4)}, 
{(8,-4) \ar @{{*}-} (13,1)*++[o][F]{\bk}}, 
{(0,3) \ar @/_1mm/ @{-}_{p} (3,0)}, 
\end{xy}\ \right)$. This is the finite sum of words in $e_0$ and $e_1$ 
which are obtained by inserting $e_0$ into (or putting at the right of) 
\[e_1\underbrace{e_0\cdots e_0}_{k_1-1}e_1\underbrace{e_0\cdots e_0}_{k_2-1}
\cdots e_1\underbrace{e_0\cdots e_0}_{k_r-1} \]
$p$ times. Take such a word $v$ with a specified way of insertion, 
and similarly $w$ from the second factor $W\left( \begin{xy}
{(0,4) \ar @{{o}.} (4,0)}, 
{(4,0) \ar @{{o}-} (8,-4)}, 
{(8,-4) \ar @{{*}-} (13,1)*++[o][F]{\bl}}, 
{(0,3) \ar @/_1mm/ @{-}_{q} (3,0)}, 
\end{xy}\ \right)$. 
Then their harmonic product $v*w$ is the sum of words 
each of which is obtained from 
\[e_1\underbrace{e_0\cdots e_0}_{h_1-1}e_1\underbrace{e_0\cdots e_0}_{h_2-1}
\cdots e_1\underbrace{e_0\cdots e_0}_{h_t-1}, \]
where $(h_1,\ldots,h_t)$ is an index appearing in $\bk*\bl$, 
by inserting $e_0$ $p+q=m$ times in the following way. 
For $a=1,\ldots,t$, $h_a=k_i$, $l_j$ or $k_i+l_j$ for some $i,j$. 
In the first two cases, insert $e_0$ into 
\[e_1\underbrace{e_0\cdots e_0}_{h_a-1}
=e_1\underbrace{e_0\cdots e_0}_{k_i-1} \ \text{ or } \ 
e_1\underbrace{e_0\cdots e_0}_{l_j-1}\] in the manner specified for 
the corresponding part of $v$ or $w$ respectively. 
In the third case, insert $e_0$ into 
\[e_1\underbrace{e_0\cdots e_0}_{h_a-1}
=e_1\underbrace{e_0\cdots e_0}_{k_i-1}\ 
e_0\underbrace{e_0\cdots e_0}_{l_j-1}\]
in the manner specified for the corresponding parts of $v$ and $w$. 

Thus we associate with each word appearing in the expansion of 
the left-hand side of \eqref{eq:B_m relation word} 
a word appearing in the right-hand side, 
and it is easy to see that this correspondence is bijective. 
Hence we have the equality \eqref{eq:B_m relation word}. 
\end{proof}

\begin{lem}\label{lem:A_mB_0}
For a non-empty index $\bk=(k_1,\ldots,k_r)$ 
and an integer $m\geq 1$, we have 
\begin{equation}\label{eq:A_mB_0}
\sum_{i=0}^{r-1}(-1)^iA_m(\bk^i)B_0(\rev{\bk_i})+(-1)^rB_m(\rev{\bk})=0. 
\end{equation}
\end{lem}
\begin{proof}
It can be shown that 
\begin{align*}
\sum_{i=0}^{r-1}(-1)^i
W\left( \begin{xy}
{(0,6) \ar @{{o}.} (4,2)}, 
{(4,2) \ar @{{o}-} (8,-2)}, 
{(8,-2) \ar @{{*}-} (14,-2)*++[F]{\bk^i}}, 
{(0,5) \ar @/_1mm/ @{-}_{m} (3,2)}, 
\end{xy}\ \right) 
\bsh W\left(\ \begin{xy}
{(8,-4) \ar @{{*}-} (14,2)*++[o][F]{\rev{\bk_i}}}, 
\end{xy}\right)
+(-1)^r W\left( \begin{xy}
{(0,4) \ar @{{o}.} (4,0)}, 
{(4,0) \ar @{{o}-} (8,-4)}, 
{(8,-4) \ar @{{*}-} (14,2)*++[o][F]{\rev{\bk}}}, 
{(0,3) \ar @/_1mm/ @{-}_{m} (3,0)}, 
\end{xy}\ \right)=0
\end{align*}
by a method similar to the proof of \eqref{eq:A_sh} 
in Lemma \ref{lem:algebraic identities}. 
Then, applying $Z_\sh$, we have 
\begin{align*}
\sum_{i=0}^{r-1}(-1)^i
A_m(\bk^i)\, Z_\sh\circ W\left(\ \begin{xy}
{(8,-4) \ar @{{*}-} (14,2)*++[o][F]{\rev{\bk_i}}}, 
\end{xy}\right)
+(-1)^rZ_\sh\circ W\left( \begin{xy}
{(0,4) \ar @{{o}.} (4,0)}, 
{(4,0) \ar @{{o}-} (8,-4)}, 
{(8,-4) \ar @{{*}-} (14,2)*++[o][F]{\rev{\bk}}}, 
{(0,3) \ar @/_1mm/ @{-}_{m} (3,0)} 
\end{xy}\ \right)=0. 
\end{align*}
Finally, we apply $\rho_Z^{-1}$ and use the assumption 
$Z_*=\rho_Z^{-1}\circ Z_\sh$ (i.e.\ the regularization theorem for $Z$) 
to obtain \eqref{eq:A_mB_0}. 
\end{proof}

Define an $\R$-linear operator $R$ on $\frH^1$ by 
$R(\bk)=(-1)^r\rev{\bk}$ for indices $\bk$ of depth $r$. 
It is immediate from the definition that 
\begin{equation}\label{eq:R(bast)}
R(\bk\bast\bl)=R(\bk)*R(\bl)
\end{equation}
holds for any indices $\bk$ and $\bl$. 
We set $B_m'(\bk)=B_m(R(\bk))$. 

Now let us return to the proof of \eqref{eq:A_m relation}. 
For non-empty indices $\bk=(k_1,\ldots,k_r)$ and 
$\bl=(l_1,\ldots,l_s)$, apply Lemma \ref{lem:A_mB_0} to 
(each term of) $\bk\bast\bl$ to see 
\begin{equation*}
\begin{split}
\sum_{i=0}^{r-1}\sum_{j=0}^{s-1}
A_m(\bk^i\bast\bl^j)&B_0'(\bk_i\bast\bl_j)
+\sum_{i=0}^{r-1}A_m(\bk^i)B_0'(\bk_i\bast\bl)\\
&+\sum_{j=0}^{s-1}A_m(\bl^j)B_0'(\bk\bast\bl_j)
+B_m'(\bk\bast\bl)=0. 
\end{split}
\end{equation*}
By \eqref{eq:R(bast)} and Lemma \ref{lem:B_m relation}, 
this can be rewritten as 
\begin{equation}\label{eq:A_m rel proof}
\begin{split}
\sum_{i=0}^{r-1}\sum_{j=0}^{s-1}
A_m(\bk^i\bast\bl^j)&B_0'(\bk_i)B_0'(\bl_j)
+\sum_{i=0}^{r-1}A_m(\bk^i)B_0'(\bk_i)B_0'(\bl)\\
&+\sum_{j=0}^{s-1}A_m(\bl^j)B_0'(\bk)B_0'(\bl_j)
+\sum_{\substack{p,q\geq 0\\ p+q=m}}B_p'(\bk)B_q'(\bl)=0. 
\end{split}
\end{equation}
By Lemma \ref{lem:A_mB_0}, the second and the third sums are cancelled out 
with the terms for $(p,q)=(m,0)$ and $(0,m)$ in the fourth sum: 
\begin{align*}
&\sum_{i=0}^{r-1}A_m(\bk^i)B_0'(\bk_i)B_0'(\bl)
+B_m'(\bk)B_0'(\bl)=0, \\
&\sum_{j=0}^{s-1}A_m(\bl^j)B_0'(\bk)B_0'(\bl_j)
+B_0'(\bk)B_m'(\bl)=0. 
\end{align*}
Furthermore, we also apply Lemma \ref{lem:A_mB_0} 
to the other terms in the fourth sum to obtain 
\[\sum_{\substack{p,q\geq 1\\ p+q=m}}
B_p'(\bk)B_q'(\bl)
=\sum_{\substack{p,q\geq 1\\ p+q=m}}
\sum_{i=0}^{r-1}\sum_{j=0}^{s-1}
A_p(\bk^i)A_q(\bl^j)B_0'(\bk_i)B_0'(\bl_j). \]
Therefore, \eqref{eq:A_m rel proof} becomes 
\[\sum_{i=0}^{r-1}\sum_{j=0}^{s-1}
\Biggl\{A_m(\bk^i\bast\bl^j)
+\sum_{\substack{p,q\geq 1\\ p+q=m}}A_p(\bk^i)A_q(\bl^j)\Biggr\}
B_0'(\bk_i)B_0'(\bl_j)=0. \]
Now we can prove \eqref{eq:A_m relation} by induction on $r$ and $s$. 
In fact, if $i>0$, the depth of $\bk^i$ is less than $r$, 
hence the induction hypothesis implies that 
the corresponding value in $\{\ \ \}$ vanishes. 
The same holds for terms with $j>0$. 
Hence also the term of $i=j=0$ must vanish, 
and this is exactly what we have to show. 
\end{proof}

\section{A proof of the restricted sum formula using the Int-Ser identity}
\label{sec:sumformula}

The sum formula for the multiple zeta values, which asserts that the sum 
of all MZVs with fixed weight and depth is equal to the Riemann zeta value 
$\zeta(k)$ ($k=$ weight), is probably the most well-known identity in the 
theory, and a good many different proofs are known. 
In the light of our Conjecture \ref{ISconj}, there should be yet another proof 
based on the integral-series identity \eqref{eq:SI}. 
In this last section, we deduce from the identity \eqref{eq:SI} 
a generalization of the sum formula provided by Eie, Liaw and Ong \cite{ELO}, 
which they call the restricted sum formula. 

Let $Z\colon\frH^0\to\R$ be a $\Q$-linear map 
(we \emph{don't} assume the double shuffle relation here). 

\begin{prop} 
If $Z$ satisfies the integral-series identity \eqref{eq:FSI}, 
then the restricted sum formula 
\begin{equation}\label{eq:RSF}
\begin{split}
&\sum_{\substack{k_1,\ldots,k_q\geq 1\\ 
k_1+\cdots+k_q=k-p}}
Z(\underbrace{1,\ldots,1}_{p-1},k_1,\ldots,k_{q-1},k_q+1)\\
&=\sum_{\substack{k_1,\ldots,k_p\geq 1\\ 
k_1+\cdots+k_p=p+q-1}}
Z(k_1,\ldots,k_{p-1},k_p+k-p-q+1)
\end{split}
\end{equation}
holds for any positive integers $k,p,q$ with $k\geq p+q$. 
\end{prop}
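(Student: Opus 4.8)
The plan is to apply the integral-series identity \eqref{eq:FSI} to the single special pair of indices that, upon summing the two sides "combinatorially," produces exactly \eqref{eq:RSF}. The natural candidate is $\bk = (\underbrace{1,\ldots,1}_{p})$ and $\bl = (\underbrace{1,\ldots,1}_{k-p})$, both strings of $1$'s, so that the total weight $k_1+\cdots+k_r$ of $\bk$ equals $p$ and that of $\bl$ equals $k-p$. With this choice, neither side of \eqref{eq:FSI} involves regularization, and both sides are genuine finite $\Q$-linear combinations of honest (admissible) indices, which is all we need since we do not assume anything about $Z$ beyond \eqref{eq:FSI}.

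First I would compute the right-hand side $Z(\bk\circledast\bl^\star)$. By the definition of $\circledast$ and the series description \eqref{eq:cast} (or directly from $ve_k\circledast we_l=(v*w)e_{k+l}$), with $\bk=(1,\ldots,1)$ of depth $p$ and $\bl=(1,\ldots,1)$ of depth $k-p$, the product $\bk\circledast\bl^\star$ equals $\bigl((\underbrace{1,\ldots,1}_{p-1}) * (\underbrace{1,\ldots,1}_{k-p-1})^\star\bigr)\,e_2$. Expanding the harmonic product of a string of $p-1$ ones against the $\star$-sum of strings built from $k-p-1$ ones is a standard compositions-count: it produces precisely a sum of indices $(k_1,\ldots,k_{q-1},k_q+1)$ prefixed appropriately, and after bookkeeping one recognizes the right-hand sum in \eqref{eq:RSF} — indeed the inner depth parameter $q$ records how many "blocks" the $k-p$ weight is broken into. (The two displayed special cases $\bk\circledast(1)^\star$ and $(1)\circledast\bl^\star$ right after \eqref{eq:cast} are the $p=1$ and $q$-degenerate instances of this.)

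Next I would compute the left-hand side $Z(\mu(\bk,\bl))$. By definition $\mu(\bk,\bl)=W$ of the 2-poset that is the "$\bk$-part" (a totally ordered chain of $p$ black dots) followed by a $\circ$ and then the "$\bl$-part" (the MZSV-type diagram for a string of $k-p$ ones, where each $\bullet$ coming from an $l_i=1$ sends its edge downward). Using property 1) and 2) of $W$ — i.e.\ resolving all incomparabilities via \eqref{eq:shuffle for W} — one writes $W$ of this diagram as a sum over the total-order extensions, which is a sum of monomials $e_1^{p}\,(\text{shuffle-type interleavings})\,e_2$-shaped words; translated back into indices this is exactly the left-hand sum of \eqref{eq:RSF}, where now the outer $p$ copies of $1$ are the rigid chain and the remaining weight $k-p$ gets distributed into $k_1,\ldots,k_q$ with a $+1$ on the last slot coming from the terminal $\circ$. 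The key check here is that the combinatorics of total-order extensions of the $\bl$-part of $\mu$ matches the "compositions of $p+q-1$ into $p$ parts" indexing on the left of \eqref{eq:RSF}.

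The main obstacle is purely bookkeeping: one must carry out both expansions so that the depth/weight parameters $(k,p,q)$ line up on the two sides without an off-by-one slip, since $\mu$ and $\circledast$ each absorb one unit of weight into the terminal letter. I expect the cleanest route is not to expand $\mu$ and $\circledast$ separately but to fix the pair $\bk=(1^p)$, $\bl=(1^{k-p})$, observe via the identities of Lemma \ref{lem:algebraic identities} — in particular \eqref{eq:A_sh} and \eqref{eq:A_*} specialized to all-ones indices — that both $\mu((1^p),(1^{k-p}))$ and $(1^p)\circledast(1^{k-p})^\star$ reduce to sums of admissible indices whose coefficients are binomial coefficients, and then to match these binomial identities with the two sides of \eqref{eq:RSF}. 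Once the two sides of \eqref{eq:FSI} are seen to equal the two sides of \eqref{eq:RSF} term by term, there is nothing left to prove.
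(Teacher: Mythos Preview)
Your choice $\bk=(\underbrace{1,\ldots,1}_p)$, $\bl=(\underbrace{1,\ldots,1}_{k-p})$ cannot give \eqref{eq:RSF}: the restricted sum formula depends on three parameters $k,p,q$, while your pair of indices depends only on $k$ and $p$, so a single instance of \eqref{eq:FSI} with this pair yields one relation with no $q$ in sight, and there is no hidden grading that lets you read off the identity ``level by level in $q$'' afterwards. More concretely, your description of the $\mu$-side is already wrong: for $\bl=(1,\ldots,1)$ the boxed diagram of Example~\ref{ex:MZSV} is a \emph{descending} chain of $\bullet$'s below the top $\odot$, so the $2$-poset underlying $\mu((1^p),(1^{k-p}))$ consists of a single maximal $\circ$ with two incomparable chains of $\bullet$'s (of lengths $p$ and $k-p-1$) beneath it. Every total-order extension produces the same word, and one obtains simply
\[
\mu\bigl((\underbrace{1,\ldots,1}_p),(\underbrace{1,\ldots,1}_{k-p})\bigr)
=(e_1^{\,p}\bsh e_1^{\,k-p-1})\,e_0
=\binom{k-1}{p}\,e_1^{\,k-1}e_0,
\]
a scalar multiple of the single index $(\underbrace{1,\ldots,1}_{k-2},2)$. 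This is not the left side of \eqref{eq:RSF} for any $q>1$. On the other side, every index occurring in $(1^p)\circledast(1^{k-p})^\star$ has last entry exactly $2$, whereas the right side of \eqref{eq:RSF} has last entry at least $k-p-q+2$; so that match fails as well.

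The paper instead takes $\bk=(\underbrace{1,\ldots,1}_{p-1},m)$ with $m=k-p-q+1$ and $\bl=(\underbrace{1,\ldots,1}_{q})$, a choice that does carry $q$. With this pair neither side of \eqref{eq:FSI} equals a side of \eqref{eq:RSF} outright; writing $S(k,p,q)$ and $T(k,p,q)$ for the two sides of \eqref{eq:RSF}, the computation gives
\[
Z(\mu(\bk,\bl))=\sum_{i=0}^{q-1}\binom{p+i-1}{p-1}S(k,p+i,q-i),
\qquad
Z(\bk\circledast\bl^\star)=\sum_{i=0}^{q-1}\binom{p+i-1}{p-1}T(k,p+i,q-i),
\]
and \eqref{eq:RSF} then follows by induction on $q$. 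The missing ideas in your proposal are precisely these: the depth of $\bl$ must be $q$ so that \eqref{eq:FSI} sees $q$ at all, and the resulting relation is an upper-triangular combination of the desired identities rather than a direct term-by-term match, so an inductive unwinding is required.
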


\begin{proof} 
Write $S(k,p,q)$ (resp.\ $T(k,p,q)$) 
for the left-hand side (resp.\ the right-hand side) of \eqref{eq:RSF}, 
and put $m=k-p-q+1$. 
Take $\bk=(\underbrace{1,\ldots,1}_{p-1},m)$ and 
$\bl=(\underbrace{1,\ldots,1}_q)$. 
Then we see that 
\begin{align}
\notag 
Z(\mu(\bk,\bl))&=Z\circ W\left(\ 
\begin{xy}
{(0,-10) \ar@{{*}-} (2,-7.5)}, 
{(2,-7.5) \ar@{{*}.} (6,-2.5)}, 
{(6,-2.5) \ar@{{*}-} (8,0)}, 
{(8,0) \ar@{o.} (12,5)}, 
{(12,5) \ar@{o-} (14,7.5)}, 
{(14,7.5) \ar@{o-} (16,5)}, 
{(16,5) \ar@{{*}.} (20,0)}, 
{(20,0) \ar@{{*}-{*}} (22,-2.5)}, 
{(-0.5,-9) \ar @/^1.5mm/ @{-}^{p} (5,-2)}, 
{(7.5,1) \ar @/^1.5mm/ @{-}^{m} (13,8)}, 
{(17,5.5) \ar @/^1mm/ @{-}^{q-1} (22.5,-1.5)}
\end{xy}\right)
=\sum_{i=0}^{q-1}Z\circ W\left( 
\begin{xy}
{(0,-10) \ar@{{*}.} (4,-5)}, 
{(4,-5) \ar@{{*}-} (6,-2.5)}, 
{(6,-2.5) \ar@{{*}-} (2,0)}, 
{(2,0) \ar@{o.} (2,6.5)}, 
{(2,6.5) \ar@{o-} (6,9)}, 
{(6,9) \ar@{o-} (10,6.5)}, 
{(10,6.5) \ar@{{*}.} (10,0)}, 
{(10,0) \ar@{{*}-} (6,-2.5)}, 
{(6,-2.5) \ar@{-} (8,-5)}, 
{(8,-5) \ar@{{*}.{*}} (12,-10)}, 
{(-0.5,-9) \ar @/^1mm/ @{-}^{p-1} (3,-4.5)}, 
{(1,0) \ar @/^1mm/ @{-}^{m-1} (1,6.5)}, 
{(11,6.5) \ar @/^1mm/ @{-}^{q-1-i} (11,0)}, 
{(9,-4.5) \ar @/^1mm/ @{-}^{i} (12.5,-9)}, 
\end{xy}\right)\\
\label{muvalue}
&=\sum_{i=0}^{q-1}\binom{p+i-1}{p-1}S(k,p+i,q-i) 
\end{align}
holds.  On the other hand, we have 
\begin{equation}\label{csvalue}
Z(\bk\circledast\bl^\star)
=Z\bigl((\underbrace{1,\ldots,1}_{p-1},m)
\circledast(\underbrace{1,\ldots,1}_q)^\star\bigr)
=\sum_{j=p}^{p+q-1}\binom{j-1}{p-1}T(k,j,p+q-j). 
\end{equation}
To see this, we first note that $(\underbrace{1,\ldots,1}_q)^\star$ is 
equal to the formal sum of all indices of weight $q$. 
An index appearing in the expansion of $\bk\circledast\bl^\star$ 
has weight $k$, depth $j$ with $p\le j\le p+q-1$, 
and the last component greater than $m$. 
Given such an index $\mathbf{m}$, we may count the number of appearances 
of $\mathbf{m}$ in the expansion of $\bk\circledast\bl^\star$ 
as $\binom{j-1}{p-1}$ because there are $\binom{j-1}{p-1}$ choices of 
the position of 1 in  $\mathbf{m}$ coming from $\bk$, 
and to each such choice there is a unique index in (the expansion of) $\bl^\star$ 
to be combined by $\circledast$ to make $\mathbf{m}$. 
Therefore, by \eqref{eq:FSI}, we obtain from 
\eqref{muvalue} and \eqref{csvalue} 
\begin{align*}
\sum_{i=0}^{q-1}\binom{p+i-1}{p-1}S(k,p+i,q-i)
&=\sum_{j=p}^{p+q-1}\binom{j-1}{p-1}T(k,j,p+q-j)\\
&\underset{j=p+i}{=}\sum_{i=0}^{q-1}\binom{p+i-1}{p-1}T(k,p+i,q-i). 
\end{align*}
Hence, by induction on $q$, the proposition follows. 
\end{proof}

\begin{cor}
The integral-series identity \eqref{eq:FSI} implies the sum formula 
\[\sum_{\substack{k_1,\ldots,k_{q-1}\geq 1,k_q\geq 2\\ 
k_1+\cdots+k_q=k}}Z(k_1,\ldots,k_q)=Z(k)\]
for any integers $k>q>0$. 
\end{cor}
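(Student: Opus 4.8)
The plan is to obtain this corollary as the special case $p=1$ of the restricted sum formula \eqref{eq:RSF} proved in the preceding Proposition. First I would check that the hypotheses match: the condition ``$k,p,q$ positive integers with $k\geq p+q$'' specializes, at $p=1$, to ``$k>q>0$,'' which is exactly the hypothesis of the corollary. Hence the Proposition applies and yields \eqref{eq:RSF} with $p=1$, and it remains only to identify the two sides with the two sides of the sum formula.

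Next I would evaluate the left-hand side $S(k,1,q)$. With $p=1$ there are $p-1=0$ leading $1$'s, so
\[
S(k,1,q)=\sum_{\substack{k_1,\ldots,k_q\geq 1\\ k_1+\cdots+k_q=k-1}}Z(k_1,\ldots,k_{q-1},k_q+1).
\]
Reindexing by $k_q'=k_q+1$ (so $k_q'\geq 2$ and $k_1+\cdots+k_{q-1}+k_q'=k$) turns this into
\[
\sum_{\substack{k_1,\ldots,k_{q-1}\geq 1,\ k_q'\geq 2\\ k_1+\cdots+k_q'=k}}Z(k_1,\ldots,k_{q-1},k_q'),
\]
which is precisely the left-hand side of the asserted sum formula. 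In particular every index occurring is admissible, so each $Z$-value is defined.

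Then I would evaluate the right-hand side $T(k,1,q)$. With $p=1$ the sum runs over $k_1\geq 1$ with $k_1=p+q-1=q$, forcing the single term $k_1=q$, and the summand is $Z(k_1+k-p-q+1)=Z(q+k-1-q+1)=Z(k)$. Thus \eqref{eq:RSF} at $p=1$ reads exactly
\[
\sum_{\substack{k_1,\ldots,k_{q-1}\geq 1,\ k_q\geq 2\\ k_1+\cdots+k_q=k}}Z(k_1,\ldots,k_q)=Z(k),
\]
completing the proof. There is essentially no obstacle here beyond bookkeeping; the only point requiring a moment's care is the reindexing $k_q\mapsto k_q+1$ and the degenerate evaluation of the right-hand sum, both of which are routine.
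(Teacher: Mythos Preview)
Your proposal is correct and follows exactly the paper's approach: set $p=1$ in the restricted sum formula \eqref{eq:RSF}. The paper's proof is the one-line ``Set $p=1$ in \eqref{eq:RSF},'' and you have simply (and accurately) unpacked the bookkeeping behind that line.
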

\begin{proof}
Set $p=1$ in \eqref{eq:RSF}. 
\end{proof}

\begin{rem} We can also deduce Hoffman's relation (\cite{H0})
$$ \sum_{i=1}^r \zeta(k_1,\ldots,k_i+1,\ldots,k_r)=\sum_{1\le i\le r \atop 
k_i\ge2} \sum_{j=1}^{k_i-1}\zeta(k_1,\ldots,k_{i-1},j,k_i-j+1,k_{i+1},\ldots,k_r)$$
more straightforwardly from the integral-series identity  \eqref{eq:FSI}  by taking 
$\bk=(k_1,\ldots,k_{r-1},k_r-1)$ and $\bl=(1,1)$.
(This was also communicated to us by Henrik Bachman.)
To deduce known identities such as Ohno's relation \cite{Ohno}
directly from  \eqref{eq:FSI} may be a good challenge for
graduate students.  

\end{rem}
\section*{Acknowledgements}
This work was supported in part by JSPS KAKENHI Grant Numbers 
JP24224001, JP23340010, JP26247004, JP16H06336, 
as well as JSPS Joint Research Project with CNRS ``Zeta functions of 
several variables and applications,'' 
JSPS Core-to-Core program ``Foundation of a Global Research Cooperative 
Center in Mathematics focused on Number Theory and Geometry" and 
the KiPAS program 2013--2018 of the Faculty of Science and Technology 
at Keio University.

\noindent
Faculty of Mathematics, Kyushu University \\
744 Motooka, Nishi-ku, Fukuoka, 819-0395, JAPAN\\
e-mail: \texttt{mkaneko@math.kyushu-u.ac.jp}

\

\noindent
Keio Institute of Pure and Applied Sciences (KiPAS), \\
Graduate School of Science and Technology, Keio University\\
3-14-1 Hiyoshi, Kohoku-ku, Yokohama, 223-8522, JAPAN\\
e-mail: \texttt{yamashu@math.keio.ac.jp}

\end{document}